\newtheorem{thm}{Theorem}
\newtheorem{prop}{Proposition}
\newtheorem{remark}[thm]{Remark}
\newcommand{\E}{{\mathbb E}}  
\newcommand{\mE}{{\mathbb E}}
\newcommand{\mR}{{\mathbb R}}
\newcommand{\R}{{\mathbb R}}
\newcommand{\cH}{{\mathcal H}}
\newcommand{\cN}{{\mathcal N}}
\newcommand{\cO}{{\mathcal O}}
\newcommand{\cP}{{\mathcal P}}
\newcommand{\cQ}{{\mathcal Q}}
\newcommand{\cV}{{\mathcal V}}
\newcommand{\cY}{{\mathcal Y}}
\newcommand{\bB}{{\mathbf B}}
\newcommand{\bC}{{\mathbf C}}
\newcommand{\bK}{{\mathbf K}}
\newcommand{\bM}{{\mathbf M}}
\newcommand{\bU}{{\mathbf U}}
\newcommand{\bm}{{\mathbf m}}
\newcommand{\bu}{{\mathbf u}}
\newcommand{\bx}{{\mathbf x}}
\newcommand{\HH}{{\mathrm H}}
\newcommand{\vv}{v}
\newcommand{\tstart}{0}
\newcommand{\tend}{1}
\newcommand{\x}{x}
\newcommand{\nrho}{\rho}
\newcommand{\ett}{{\bf 1}}
\newcommand{\tr}{\operatorname{Tr}}
\newcommand{\argmin}{\operatorname{argmin}}
\definecolor{grey}{rgb}{0.6,0.6,0.6}
\definecolor{lightgray}{rgb}{0.97,.99,0.99}
\begin{document}
\title{Density control of interacting agent systems}

\author{Yongxin Chen
\thanks{This work was supported by the NSF under grant 1901599, 1942523 and 2008513.}
\thanks{Y.\ Chen is with the School of Aerospace Engineering, Georgia Institute of Technology, Atlanta, GA, USA. {\tt\small \{yongchen\}@gatech.edu}}}

\maketitle

\begin{abstract}
We consider the problem of controlling the group behavior of a large number of dynamic systems that are constantly interacting with each other. These systems are assumed to have identical dynamics (e.g., birds flock, robot swarm) and their group behavior can be modeled by a distribution. Thus, this problem can be viewed as an optimal control problem over the space of distributions. We propose a novel algorithm to compute a feedback control strategy so that, when adopted by the agents, the distribution of them would be transformed from an initial one to a target one over a finite time window. Our method is built on optimal transport theory but differs significantly from existing work in this area in that our method models the interactions among agents explicitly. From an algorithmic point of view, our algorithm is based on a generalized version of proximal gradient descent algorithm and has a convergence guarantee with a sublinear rate. We further extend our framework to account for the scenarios where the agents are from multiple species. In the linear quadratic setting, the solution is characterized by coupled Riccati equations which can be solved in closed-form. Finally, several numerical examples are presented to illustrate our framework. 
\end{abstract}

\section{Introduction}
%
%
%
%

Consider the swarm control \cite{BraFerBirDor13,ChuParDamKum18} task to establish and regulate a formation of $N$ drones (``agents''). There are two substantially different angles from which one may consider this problem. A first (straightforward) approach is to concatenate the states of all the drones into one state vector and then formulate a control problem over this joint state space. Suppose the state dimension of each individual drone is $d$, then the dimension of the joint state space becomes $Nd$, which scales linearly as the group size $N$ increases. An alternative approach is to treat the distribution of the drones as {\em the} state of a system, and formulate a corresponding optimal control problem over the space of distributions. One major difference between the two approaches is that, in the former, each individual has a label and the controller aims to jointly optimize the performance of each individual, while in the latter, the {\em individuals are indistinguishable and only group behavior matters}. Thus, when the optimality criteria only involves the group behavior of individuals, the problem reduces to a density control problem. In this formulation, the state is a probability distribution and is independent of the group size $N$.

The density control problem is an optimal control problem over distributions where the objective/cost function is fully determined by the evolution of the distribution. The dynamics of the distribution of the group follows the Liouville equation, or the Fokker-Planck equation if the individual dynamics is stochastic, or the McKean-Vlasov equation if the individuals interact with each other \cite{Mck66,HuaMalCai06}. In addition to controlling the group behavior of a large number of individuals, the density control can also be used to deal with controlling the state uncertainty of a single dynamical system. When a dynamical system either has uncertainty in its initial state or is disturbed by random process noise, the state remains uncertain and can be captured by a probability distribution at each time point. Regulating the state uncertainties of such systems is thus equivalent to controlling their state distribution. The density control problem provides a more direct approach to achieve this goal than standard stochastic control theory where an indirect cost needs to be properly handcrafted. The density control framework has found applications in a range of areas \cite{ZhuGriSke95,RidTsi18,OkaTsi19,Kal02,SinManBra21}.

Two popular tools for density control are the optimal transport (OT) theory \cite{Vil03} and the Schr\"odinger bridge theory \cite{Leo13,CheGeoPav21a}. The latter can be viewed as a regularized version of the former. The minimum effort control between two specified distributions over a finite time interval can be addressed using the optimal transport theory if the dynamics is deterministic and the Schr\"odinger bridge theory if the dynamics is stochastic. This paradigm is suitable for controlling uncertainty of a single dynamical system. An important instance of it is covariance control \cite{HotSke87,CheGeoPav14a,CheGeoPav14b,HalWen16,CheGeoPav17a,Bak18,RidTsi18,OkaTsi19} where the distribution at each time point is parametrized by a Gaussian distribution determined by its mean and covariance. The connections between covariance control and the Schr\"odinger bridge has been extensively studied in \cite{CheGeoPav14a}. One major limitation of these existing methods of density control for controlling group behaviors is that the individuals in the group are assumed to be independent to each other. Thus, important properties for swarm control such as collision avoidance are not explicitly modeled in these methods. Indeed, when these methods are applied for swarm control \cite{KriMar18,ElaBer19,InoItoYos20,BisElaBer20,ZheHanLin20,SinManBra21}, the collision avoidance requirement is often ignored. 


The goal of this work is to address density control problems for controlling group behaviors when the individuals in the group are constantly interacting with each other. We consider the scenario where the interactions between every two agents are through an interactive potential that is the same for each pair of agents. When the number of individuals is large, the distribution of the individuals solves the McKean-Vlasov equation. We propose to build on recent work on mean-field Schr\"odinger bridges \cite{BacConGenLeo20} which is a generalization of the Schr\"odinger bridge theory when the prior dynamics is modeled by a McKean-Vlasov equation, and reformulate this problem into an optimization over the space of path measures. With proper discretization over space and time the problem becomes a nonlinear version of the multi-marginal optimal transport (MOT) \cite{Pas15,HaaRinChe20,HaaSinZhaChe20}. To numerically solve this problem, we adopt the proximal gradient algorithm \cite{BecTeb03,Bec17} to sequentially linearize the nonlinear MOT and then take advantage of existing algorithms \cite{HaaRinChe20,HaaSinZhaChe20} for MOT for each iteration. We further extend our method to the setting when multiple species are involved. Finally, in the linear quadratic setting where the dynamics is linear and the cost function is quadratic, we characterize the optimal solution via a coupled Riccati equation system and obtain the closed-form solution. 


The rest of the paper is organized as follows. In Section \ref{sec:back} we briefly introduce the several tools that will be used in this work. The main results on density control for interacting agent system are presented in Section \ref{sec:DC}. An extension to density control problems involving multiple species is provided in Section \ref{sec:multi}. We investigate the problem in the linear quadratic setting in Section \ref{sec:LQ} and obtain a closed-form solution. Several numerical examples are presented in Section \ref{sec:eg} to illustrate the proposed framework. This is followed by a short concluding remark in \ref{sec:conclusion}.

\section{Background}\label{sec:back}
In this section we introduce several mathematical tools on which our density control framework is based, including optimal transport and its generalizations, and the proximal gradient algorithm. 
\subsection{Optimal transport}
Given two nonnegative measures $\mu, \nu$ on ${\mathbb R}^d$ having equal total mass (often assumed to be probability distributions), the Monge's formulation of optimal transport seeks a transport map
\[
T\;:\;{\mathbb R}^d\to{\mathbb R}^d\;:\;x\mapsto T(x)
\]
from $\mu$ to $\nu$ in the sense $T_\sharp \mu=\nu$, that incurs minimum cost of transportation
\[
\int c(x,T(x))\mu(dx).
\]
Here, $c(x,y)$ stands for the transportation cost per unit mass from point $x$ to $y$.
The dependence of the total transportation cost on $T$ is highly nonlinear, complicating early analyses to the problem \cite{Vil03}. This problem was later relaxed by Kantorovich, where, instead of a transport map, a joint distribution $\pi$ on the product space $\R^d\times\R^d$ is sought.
Let $\Pi(\mu,\nu)$ be the set of joint distributions of $\mu$ and $\nu$, then the Kantorovich formulation of OT reads
    \begin{equation}\label{eq:OptTrans}
        \inf_{\pi\in\Pi(\mu,\nu)}\int_{\R^d\times\R^d}c(x,y)\pi(dxdy).
    \end{equation}

Both the Monge's and the Kantorovich's formulations are ``static'' focusing on ``what goes where.''
It turns out that the OT problem can also be cast as a dynamical problem with a temporal dimension. In particular, when $c(x,y)=\frac{1}{2}\|x-y\|^2$, OT can be formulated as a stochastic control problem
    \begin{subequations}\label{eq:stochcontr}
    \begin{eqnarray}\label{eq:stochcontr1}
        &&\inf_{\vv\in\cV}\E\left\{\int_{\tstart}^{\tend}\frac{1}{2}\|\vv(t,\x^\vv(t))\|^2dt\right\},
        \\
        && \dot{\x}^\vv(t)=\vv(t,\x^\vv(t)),\label{eq:stochcontr2}
        \\
        && \x^v(\tstart)\sim\mu,\quad \x^v(\tend)\sim\nu.
    \end{eqnarray}
    \end{subequations}
Here $\cV$ represents the family of admissible state feedback control laws. Note that this control problem \eqref{eq:stochcontr} differs from standard ones in that the terminal constraint $\x^v(\tend)\sim\nu$, meaning $\x^v(1)$ follows distribution $\nu$, is unconventional. In \eqref{eq:stochcontr}, the goal is to find an optimal control policy to drive the system \eqref{eq:stochcontr2} from an uncertain initial state $\x^v(\tstart)\sim\mu$ to an uncertain target state $\x^v(\tend)\sim\nu$. The solution to \eqref{eq:stochcontr} specifies how to move mass over time from configuration $\mu$ to $\nu$, providing more resolution to the optimal transport plan.

Assuming $x^v(t)$ has a absolutely continuous distribution with density $\rho_t$, $\rho_t$ satisfies weakly\footnote{In the sense that $\int_{[0,1]\times\mR^d}[(\partial_t f+v\cdot\nabla f)\rho_t]dtdx=0$ for smooth functions $f$ with compact support.} the continuity equation
    \begin{equation}\label{eq:continuity}
        \partial_t\rho_t+\nabla\cdot(\vv\rho_t)=0,
    \end{equation}
and the total transport cost becomes
    \[
        \E\left\{\int_{\tstart}^{\tend}\frac{1}{2}\|\vv(t,\x^\vv(t))\|^2dt\right\}=\int_{\R^d}\int_{\tstart}^{\tend}\frac{1}{2}\|\vv(t,x)\|^2\rho_t(x) dtdx.
    \]
Thus, \eqref{eq:stochcontr} is equivalent to \cite{BenBre00}
    \begin{subequations}\label{eq:BB}
    \begin{eqnarray}\label{eq:BB1}
        &&\inf_{\rho,v}\int_{\R^d}\int_{\tstart}^{\tend}\frac{1}{2}\|\vv(t,x)\|^2\rho_t(x) dtdx,\\&&\partial_t\rho_t+\nabla\cdot(\vv\rho_t)=0,\label{eq:BB2}\\&& \rho_0=\mu, \quad \rho_1=\nu.\label{eq:boundary}
    \end{eqnarray}
    \end{subequations}
The minimum is taken over all the pairs $\rho, v$ satisfying
 \eqref{eq:BB2}-\eqref{eq:boundary} and some other technical conditions,
 see \cite[Theorem 8.1]{Vil03}, \cite[Chapter 8]{AmbGigSav06}.
 
Suppose we have a large number of individuals that share the same dynamics \eqref{eq:stochcontr2} but are independent to each other. Assume their initial states follow the same distribution $\mu$, then \eqref{eq:BB} can be viewed as an density control problem for this group whose objective is to find a common control strategy for the group so that it would reach target distribution/configuration $\nu$ at time $t=1$. The solution to \eqref{eq:BB} is characterized by the coupled partial differential equations (PDEs) 
	\begin{eqnarray*}
		&& \partial_t \lambda + \frac{1}{2} \nabla \lambda ^T \nabla \lambda  = 0
		\\&&
		\partial_t \rho_t +\nabla\cdot(\rho_t \nabla\lambda) = 0
		\\&&
		\rho_{0} = \mu, \quad \rho_{1} = \nu,
	\end{eqnarray*}
and the optimal control is $v(t,x) = \nabla \lambda(t,x)$.

\subsection{Schr\"odinger bridges}
In 1931/32, Schr\"odinger \cite{Sch31,Sch32} posed the following problem: A large number N of independent Brownian particles in $\mR^d$ is observed to have an empirical distribution approximately equal to $\mu$ at time $t=0$, and at some later time $t=1$ an empirical distribution approximately equal to $\nu$. Suppose that $\nu$ differs from what it should be according to the law of large numbers, namely
    \[
        \int q_\epsilon(0,x,1,y)\mu(dx),
    \]
where
    \[
        q_\epsilon(s,x,t,y)=(2\pi)^{-d/2}[\epsilon(t-s)]^{-d/2}\exp\left(-\frac{\|x-y\|^2}{2\epsilon(t-s)}\right)
    \]
denotes the scaled Brownian transition probability density. It is apparent that the particles have been transported in an unlikely way. But of the many unlikely ways in which this could have happened, which one is the most likely?

This problem can be understood in the modern language of large deviation theory as a problem \cite{Fol88} of determining a probability law $\cP$ on the path space $\Omega = C([0,1],\mR^d)$ that minimizes the relative entropy (a.k.a., Kullback-Leibler divergence)\footnote{$\frac{d\cP}{d\cQ}$ denotes the Radon-Nikodym derivative between $\cP$ and $\cQ$.}
\begin{equation}\label{eq:SBmeasure}
{\rm KL}(\cP \|\cQ):=\int_\Omega \log\left(\frac{d\cP}{d\cQ}\right)d\cP.
\end{equation}
Here $\cQ$ is the probability law induced by the Brownian motion and $\cP$ is chosen among probability laws that are absolutely continuous with respect to $\cQ$ and have the prescribed marginals. 
The solution to this optimization problem is referred to as the {\em Schr\"odinger bridge}. Existence and uniqueness of the minimizer have been proven in various degrees of generality by Fortet \cite{For40}, Beurling \cite{Beu60}, Jamison \cite{Jam74}, F\"ollmer \cite{Fol88}. 

It has been shown that the above Schr\"odinger's problem can be reformulated as the stochastic control problem \cite{CheGeoPav14e}
    \begin{subequations}\label{eq:stochsticcontrol}
    \begin{eqnarray}
        && \inf_{v\in \cV} \mE \left\{\int_{\tstart}^{\tend}\frac{1}{2}\|v(t,X_t)\|^2dt\right\},
        \\
        && dX_t=v(t, X_t)dt+\sqrt{\epsilon}dB_t,\label{eq:stochasticcontrol1}
        \\
        && X_0 \sim \mu, \quad X_1 \sim \nu.
    \end{eqnarray}
    \end{subequations}
Here $\cV$ is the class of finite energy Markov controls.
This reformulation of the Schr\"odinger problem relies on the fact that the relative entropy between distributions induced by the controlled and uncontrolled processes is
	\[
		{\rm KL} (\cP\| \cQ) = \mE \left\{\int_{\tstart}^{\tend}\frac{1}{2\epsilon}\|v(t,X_t)\|^2dt\right\}.
	\]
The proof is based on Girsanov theorem, see \cite{Dai91,KarShr88}. It is easy to see that \eqref{eq:stochsticcontrol} has the following density control reformulation \cite{CheGeoPav14c,GenLeoRip15}
    \begin{subequations}\label{eq:daipra}
    \begin{eqnarray}\label{eq:expectedcost}
        && \inf_{\nrho,v} \int_{\mR^d}\int_{\tstart}^{\tend}\frac{1}{2}\|v(t,x)\|^2\nrho_t dtdx,
        \\
        && \partial_t \nrho_t+\nabla\cdot(v\nrho_t)- 
        \frac{\epsilon}{2}\Delta \nrho_t=0,\label{eq:fokkerplanck}
        \\
        && \nrho_0=\mu, \quad \nrho_1=\nu,
    \end{eqnarray}
    \end{subequations}
where the infimum is over smooth fields $v$ and $\rho$ that solve weakly of the  corresponding Fokker-Planck equation \eqref{eq:fokkerplanck}. 

Formulation \eqref{eq:daipra} resembles the OT problem \eqref{eq:BB} except for the presence of the Laplacian in \eqref{eq:fokkerplanck}.
It has been shown \cite{Mik04,MikThi08,Leo12,Leo13} that the OT problem is, in a suitable sense, indeed the limit of the Schr\"{o}dinger problem when the diffusion coefficient $\epsilon$ of the reference Brownian motion goes to zero. On the other hand, the Schr\"odinger bridge can be viewed as a regularized version of OT. Similar to \eqref{eq:BB}, the solution to \eqref{eq:daipra} is characterized by the coupled PDEs
	\begin{subequations}
	\begin{eqnarray}
		&& \partial_t \lambda + \frac{1}{2} \nabla \lambda ^T \nabla \lambda +\frac{\epsilon}{2}\Delta \lambda = 0
		\\&&
		\partial_t \rho_t +\nabla\cdot(\rho_t \nabla\lambda) -\frac{\epsilon}{2} \Delta \rho_t= 0
		\\&&
		\rho_{0} = \mu, \quad \rho_{1} = \nu,
	\end{eqnarray}
	\end{subequations}
and the corresponding optimal control policy is $v(t,x) = \nabla \lambda(t,x)$.

\subsection{Multi-marginal optimal transport}
In this section we introduce discrete OT where the supports of the marginal distributions are discrete sets. 
 In this discrete setting, the marginals $\boldsymbol\mu_1\in \mR_+^{d_1},\boldsymbol\mu_2 \in \mR_+^{d_2}$ are nonnegative vectors with equal sum. 
 The transport cost function can be rewritten in the matrix form $\bC=[C(x_1,x_2)]\in \mR^{d_1\times d_2}$ where $C(x_1,x_2)$ represents the transport cost of moving a unit mass from point $x_1$ to $x_2$. Similarly, a transport plan is encoded in a joint probability matrix $\bB = [B(x_1,x_2)] \in \mR_+^{d_1\times d_2}$ of $\boldsymbol\mu_1, \boldsymbol\mu_2$. The total transport cost is $\sum_{x_1,x_2} C(x_1,x_2) B(x_1,x_2)= \tr(\bC^T \bB)$ and the OT problem becomes
\begin{equation}\label{eq:omt_bi_discrete}
    \min_{\bB \in \mR_+^{d_1\times d_2}}~ \tr(\bC^T \bB) 
\quad \text{ subject to }\quad \bB \ett = \boldsymbol\mu_1,~ \bB^T \ett = \boldsymbol\mu_2,
\end{equation}
where $\ett$ denotes a vector of ones of proper dimension. The constraints are to enforce that $\bB$ is a joint distribution of $\boldsymbol\mu_1$ and $\boldsymbol\mu_2$. 

Multi-marginal optimal transport (MOT) extends OT to the setting involving multiple distributions. In particular, in MOT, one seeks a transport plan among a set of marginals $\boldsymbol\mu_1,\dots,\boldsymbol\mu_J$ with $J\geq 2$. 
In the discrete setting, the transport cost is encoded in a tensor $\bC = [C(x_1,x_2,\ldots, x_J)]\in \mR^{d_1\times d_2\times\cdots\times d_J}$ where $C(x_1,x_2,\ldots, x_J)$ denotes the unit cost associated with $(x_1, x_2,\ldots, x_J)$, and the transport plan is described by a tensor  $\bB \in \mR_+^{d_1\times d_2\times\cdots\times d_J}$.
For a transport plan $\bB$, the total cost is $\langle \bC, \bB\rangle := \sum_{x_1,x_2,\ldots, x_J} C(x_1,\ldots, x_J) B(x_1,\ldots, x_J)$.
Thus, similar to \eqref{eq:omt_bi_discrete}, MOT has a linear programming formulation
\begin{equation} \label{eq:omt_multi_discrete}
\min_{\bB \in \mR_+^{d_1\times \dots \times d_J}} \langle \bC, \bB \rangle  \quad
\text{ subject to } ~ P_j (\bB) = \boldsymbol\mu_j,  \text { for } j \in \Gamma,
\end{equation}
where $\Gamma\subset \{1,2,\dots,J\}$ is an index set specifying which marginal distributions are given, and the projection on the $j$-th marginal of $\bB$ is defined by
\begin{equation} \label{eq:proj_discrete}
P_j(\bB) = \sum_{x_1,\ldots,x_{j-1},x_{j+1},\ldots,x_J} B (x_1,\dots,x_{j-1},x_j,x_{j+1},\dots,x_J).
\end{equation}

A popular method to solve the OT problem is entropy regularization, which adds an entropy term
\begin{equation}
\cH(\bB) = - \sum_{x_1,\dots,x_J} B(x_1,\dots,x_J)  \log B(x_1,\dots,x_J)
\end{equation}
to \eqref{eq:omt_multi_discrete}, resulting in the strictly convex optimization problem
\begin{equation} \label{eq:omt_multi_regularized}
\min_{\bB \in \mR^{d_1\times \dots \times d_J}} \langle \bC, \bB \rangle - \epsilon \cH(\bB)
~ \text{ subject to } ~ P_j (\bB) = \boldsymbol\mu_j,  \text { for } j \in \Gamma
\end{equation}
with $\epsilon>0$ being a regularization parameter. 
Invoking Lagrangian duality, one can show that the optimal solution to \eqref{eq:omt_multi_regularized} is
\begin{equation}\label{eq:optB}
\bB = \bK \odot \bU,
\end{equation}
where $\odot$ denotes element-wise multiplication, 
	\begin{equation}\label{eq:K}
		\bK = \exp(- \bC/\epsilon),
	\end{equation}
and $\bU= \bu_1 \otimes \bu_2 \otimes \dots \otimes \bu_J$ with the vectors $\bu_j \in \mathbb{R}^{d_j}$ being associated with the Lagrange multipliers.

The Sinkhorn algorithm \cite{Sin64,Cut13,FraLor89} iteratively updates the vectors $\bu_j$ according to
\begin{equation} \label{eq:sinkhorn_multi}
\bu_j \leftarrow \bu_j \odot \boldsymbol\mu_j ./ P_j(\bK \odot \bU),
\end{equation}
for all $j\in\Gamma$. Here $./$ denotes element-wise division. 
\begin{algorithm*}[tb]
   \caption{Sinkhorn Algorithm for MOT}
   \label{alg:sinkhorn}
\begin{algorithmic}
    \STATE Compute $\bK=\exp(- \bC/\epsilon)$
   \STATE Initialize $\bu_1, \bu_2, \ldots, \bu_J$ to $\mathbf{1}$
   \WHILE{not converged}
   \FOR{$j \in \Gamma$}
        \STATE Compute $\bU= \bu_1 \otimes \bu_2 \otimes \dots \otimes \bu_J$
        \STATE Update $\bu_j$ as $\bu_j \leftarrow \bu_j \odot \boldsymbol\mu_j ./ P_j(\bK \odot \bU)$
    \ENDFOR
    \ENDWHILE
\end{algorithmic}
\end{algorithm*}
The Sinkhorn algorithm (Algorithm~\ref{alg:sinkhorn}) has a global convergence guarantee \cite{BauLew00,LuoTse93}. Moreover, it has a linear convergence rate \cite{LuoTse92,HaaRinChe20}.

Regardless the rapid developments of OT algorithms, computation remains a major problem preventing OT, in particular MOT, from widely used in applications. Although Algorithm~\ref{alg:sinkhorn} is easy to implement and considerably faster than generic linear programming solvers, its complexity still scales exponentially as $J$ grows. The computational bottleneck of it lies in the calculation of the projections $P_j(\bB)$, $j \in \Gamma$ in \eqref{eq:proj_discrete}. 

Recently it was discovered that the computation of MOT can be greatly accelerated if the cost tensor $\bC$ has a graphical structure \cite{HaaSinZhaChe20}, that is, the cost tensor $\bC$ can be decomposed as
\begin{equation}\label{eq:cost_structure}
    C(\bx) = C(x_1,x_2,\ldots,x_J) = \sum_{(i,j)\in E} C_{ij}(x_i,x_j),
\end{equation}
where $E$ denotes the set of edges of an undirected graph $G = (V,E)$. An important instance of this graphical OT is the Barycenter problems \cite{AguCar11,KuaTab19,FanTagChe20} where the cost $\bC$ can be decomposed into the sum of pairwise costs between the target distribution and each given marginal distribution; the cost thus corresponds to a star-shaped tree.
	
	\begin{figure}[h]
	\centering
	\begin{tikzpicture}
%
%
     
       \tikzstyle{bluenode}=[draw, circle, minimum size=2em, inner sep=1pt, fill=blue!45];
       \tikzstyle{bluenode1}=[draw, circle, minimum size=2em, inner sep=1pt, fill=blue!4];
  \tikzstyle{edge}=[>=stealth, thick, black] 
  
  \tikzstyle{ref}=[draw, circle, minimum size=0.2em, inner sep=1pt, color = white, fill=white];

    
\node [bluenode] at (-0.8,-2) (b1) {$x_1$};

\node [bluenode1] at (0,0) (b4) {$x_4$};

\node [bluenode] at (4,0) (b2) {$x_2$};

\node [bluenode] at (4,2) (b3) {$x_3$};

\node [bluenode1] at (0,1.5) (b5) {$x_5$};

\node [bluenode1] at (2,-1.5) (b6) {$x_6$};

\node [bluenode1] at (2,1.75) (b7) {$x_7$};

\draw[edge, ] (b2) to (b6);
\draw[edge, ] (b6) to (b4);
\draw[edge, ] (b4) to (b1);
\draw[edge, ] (b4) to (b5);
\draw[edge, ] (b7) to (b5);
\draw[edge, ] (b3) to (b7);

%
%
\end{tikzpicture}
	\caption{An example of graphical OT.}  
	\label{fig:general_factor_clean}
\end{figure}
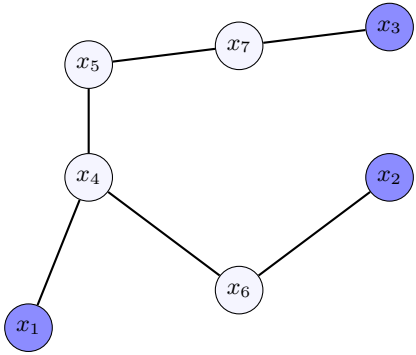
The marginal constraints $P_j(\bB)=\boldsymbol\mu_j$ for the graphical OT problem can be imposed on any variable node $j\in V$. 
Figure~\ref{fig:general_factor_clean} depicts a graph with 7 nodes. The shaded nodes in the figure correspond to marginal constraints, thus, in this example, $\Gamma= \{1, 2, 3\} \subset V =\{1,2,\ldots,7\}$. 

Consider the entropy regularized MOT problem \eqref{eq:omt_multi_regularized}. When the cost  $\bC$ has form \eqref{eq:cost_structure}, $\bK$ in \eqref{eq:K} equals $$\bK = [K(\bx)] = [\prod_{(i,j)\in E} K_{ij}(x_i,x_j)]$$ with
	\begin{equation}\label{eq:Kalpha}
		K_{ij}(x_i,x_j) = \exp (-C_{ij}(x_i,x_j)/\epsilon).
	\end{equation}
It follows that the optimal solution \eqref{eq:optB} to the entropy regularized MOT problem \eqref{eq:omt_multi_regularized} has a graphical representation as
	\begin{equation}\label{eq:MGM}
		\bB\!=\!\bK\odot\bU \!=\! [K(\bx)U(\bx)] \!=\!\! \left[\!\left(\!\prod_{(i,j)\in E} K_{ij}(x_i,x_j)\right)\!\!\left(\prod_{j\in V} u_j(x_j)\right)\right].
	\end{equation}
This is nothing but a probabilistic graphical model \cite{KolFri09}. 
	
The Lagrangian approach of solving the constrained optimization problem \eqref{eq:omt_multi_regularized} seeks multipliers $\bu_j$, for $j\in\Gamma$, such that the tensor $\bB=\bK\odot\bU$ satisfies all the constraints $P_j(\bB) = \boldsymbol\mu_j$, for $j\in\Gamma$. Thus, in view of \eqref{eq:MGM}, solving the MOT problem \eqref{eq:omt_multi_regularized} is equivalent to finding a set of artificial local potentials $\bu_j$, for $j\in\Gamma$, such that the graphical model $K(\bx)U(\bx)$ in \eqref{eq:MGM} has the specified marginal distribution $\boldsymbol\mu_j$ on the $j$-th variable node for all $j\in \Gamma$.
This new perspective allows us to combine Sinkhorn algorithm and probabilistic graphical model theory to solve graphical OT problems. In particular, for fixed multipliers $\bu_1, \bu_2, \ldots, \bu_J$, calculating the projection $P_j(\bK \odot \bU)$ is exactly a Bayesian inference \cite{KolFri09} problem of inferring the $j$-th variable node over the graphical model $K(\bx)U(\bx)$.

When the graphical structure of $\bC$ is a tree, we arrive at the Sinkhorn belief propagation \cite{HaaSinZhaChe20} algorithm (Algorithm \ref{alg_iterative_scaling}), to solve the MOT problem \eqref{eq:omt_multi_regularized}: applying the Sinkhorn algorithm and utilizing the Belief Propagation algorithm \cite{YedFreWei03} to carry out the computation of $P_j(\bK \odot \bU)$ with the current multiplier $\bU$. Here we have assumed, without loss of generality, $\Gamma$ is a subset of the leaf nodes \cite{HaaSinZhaChe20}. Let $j_1, j_2, \ldots$ be a sequence taking values in $\Gamma$ in cyclic order and suppose the Sinkhorn algorithm is carried out in this order, then after the $k$-th iteration, $\bu_{j_k}$ is updated, and the only projection required in the next iteration is $P_{j_{k+1}}(\bK \odot \bU)$. It turns out that to evaluate $P_{j_{k+1}}(\bK \odot \bU)$, it suffices to update the messages on the path from $j_k$ to $j_{k+1}$ as used in Algorithm \ref{alg_iterative_scaling}. Compared with standard Sinkhorn algorithm, the acceleration of SBP is tremendous for MOT problems with a large number of marginals; the Belief Propagation algorithm scales well for large problem while the complexity of the brute force projection using the definition \eqref{eq:proj_discrete} grows exponentially as the number of marginals increases. 
\begin{algorithm*}[tb]
   \caption{Sinkhorn Belief Propagation (SBP) Algorithm}
   \label{alg_iterative_scaling}
\begin{algorithmic}
   \STATE Initialize the messages $m_{i\rightarrow j} (x_j)$ to be $\ett$
   \STATE Let $j_1, j_2, \ldots$ be a sequence taking values in $\Gamma$ in cyclic order
   \WHILE{not converged}
        \STATE Update  $m_{j_k\rightarrow i}(x_i), i \in N(j_k)$ using 
        		\begin{subequations}\label{eq:is_bp_momt}
        		\begin{equation}
		m_{j\rightarrow i}(x_i) \propto \sum_{x_j}K_{ij}(x_i,x_j) \frac{\mu_j(x_j)}{m_{i\rightarrow j}(x_j)}, \quad \forall x_i\label{eq:is_bp_momt3}
		\end{equation}
        \STATE Update the rest of messages on the path from node $j_k$ to node $j_{k+1}$ according to 
                \begin{equation}
m_{i\rightarrow j} (x_j) \propto \sum_{x_i} K_{ij}(x_i,x_j) \prod_{k\in N(i)\backslash j}m_{k\rightarrow i}(x_i), \quad \forall x_j \label{eq:is_bp_momt1}
	\end{equation}
	\end{subequations}
    \ENDWHILE
\end{algorithmic}
\end{algorithm*}
Upon convergence of Algorithm~\ref{alg_iterative_scaling}, the solution to graphical OT problem can be obtained through $\bB=\bK\odot \bU$ with $\bU=\bu_1\otimes \bu_2\otimes \ldots \otimes\bu_J$, where $\bu_j=\boldsymbol\mu_j/\bm_{i\rightarrow j}, i\in N(j)$ for $j\in \Gamma$, and $\bu_j=\mathbf{1}$ otherwise.
 
For more general graphical structures, we can convert it into a tree first using the junction tree algorithm \cite{KolFri09} and then apply the Sinkhorn belief propagation algorithm on the resulting junction tree. The complexity of the algorithm depends on the node size of the junction tree, which scales exponentially as the tree-width of the graph. For graphical structure with small tree width, this algorithm is still efficient. 

\subsection{Proximal gradient algorithm}
The proximal gradient algorithm \cite{Bec17} is an popular algorithm for the composite optimization
	\begin{equation}\label{eq:composite}
		\min_{y\in\cY} F(y) + G(y),
	\end{equation}
where $\cY$ denotes the feasibility set. The function $F$ is assumed to be smooth. The function $G$ is usually a regularizer that is possibly nonsmooth. The algorithm reads
	\begin{subequations}\label{eq:pgupdate}
	\begin{eqnarray}\label{eq:pgupdate1}
		\hspace{-0.1cm}y^{k+1} \!\!\!\!\!\!&=&\!\!\!\!\! \argmin_{y\in \cY} G(y) + \frac{1}{2 \eta} \|y-(y^k - \eta \nabla F(y^k))\|^2
		\\\!\!\!\!\!\!&=&\!\!\!\!\! \argmin_{y\in \cY} \!G(y)\! +\! \frac{1}{2 \eta} \|y\!-\!y^k\|^2\! +\! \langle \nabla F(y^k), y\!-\!y^k\!\rangle \label{eq:pgupdate2}
	\end{eqnarray}
	\end{subequations}
where $\eta>0$ is the stepsize. One advantage of the proximal gradient algorithm is that it only evaluates the gradient of $F$ and doesn't require $G$ to be differentiable. In many applications, $G$ is a regularizer of simple form, e.g., 1-norm, and the minimization \eqref{eq:pgupdate} can be implemented efficiently. 

The proximal gradient algorithm has been generalized to the non-Euclidean setting. It is built upon the mirror descent method \cite{BecTeb03,Bec17}. Let $D(\cdot, \cdot)$ be a Bregman divergence, then the generalized non-Euclidean proximal gradient algorithm reads
	\begin{equation}\label{eq:gpg}
		y^{k+1} = \argmin_{y\in \cY} G(y) + \frac{1}{\eta} D(y, y^k) + \langle \nabla F(y^k), y-y^k\rangle.
	\end{equation}
A popular choice of $D(\cdot,\cdot)$ is the Kullback-Leibler divergence ${\rm KL}(\cdot\|\cdot)$, which is suitable for optimization over probability vectors/distributions. 

The (generalized) proximal gradient algorithm has nice convergence properties. When both $F$ and $G$ are convex, the algorithm is guaranteed to converge to the global minimum with rate $\cO(1/k)$ \cite{BecTeb03,Bec17}. When $F$ is nonconvex, one can only expect for convergence to local solutions. It turns out that objective function $F(y)+G(y)$ is monotonically decreasing along the updates, and the updates converge to some stationary points with sublinear rate $\cO(1/k)$ with respect to some suitable criteria \cite{LiZhoLiaVar17}. 

\section{Density control of interacting agent systems}\label{sec:DC}
Consider a collection of dynamical systems 
	\begin{equation}\label{eq:particle}
	dX_t^i = -\frac{1}{N} \sum_{j=1}^N \nabla W(X_t^i -X_t^j) dt + u_t^idt+\sqrt{\epsilon}dB_t^i,~ i = 1,\ldots, N,
	\end{equation}
where $X_t^i\in\mR^d, u_t^i\in\mR^d$ denote the state and control of agent $i$ respectively. The disturbance is modeled by a standard Wiener process $B_t$. The $N$ agents interact with each other through an interaction potential $W$, which is assumed to be continuously differentiable and symmetric, i.e., $W(x)=W(-x), \forall x$. Clearly, $\nabla W(0) = 0$. The Hessian of $W$ is assumed to be bounded, from both above and below. We are interested in controlling the collective dynamics of the individuals \eqref{eq:particle}. Our goal is to find a common feedback strategy for the $N$ agents to steer them from an initial group configuration to a target configuration over a finite time interval $[0,\,1]$\footnote{We use the unit time interval $[0,\,1]$ to simplify the notation. A general time interval can be transformed into $[0,\,1]$ by rescaling.} with minimum effort. Let $\xi_t(x)$ be the feedback strategy of the agents, meaning $u_t^i = \xi_t(X_t^i)$. The cost function to minimize is the average quadratic control effort
	\begin{equation*}
		\mE \left\{\int_0^1 \frac{1}{2N} \sum_{i} \|\xi_t(X_t^i)\|^2 dt\right\}.
	\end{equation*}

In the mean field limit as $N\rightarrow \infty$, the group behavior can be captured by a probability distribution
	\[
		\rho_t \approx \frac{1}{N} \sum_{i=1}^N \delta_{X_t^i}
	\]
with $\delta_x$ denoting the Dirac distribution, and this density evolves according to the McKean-Vlasov equation \cite{Mck66}
	\begin{equation}\label{eq:MV}
		\partial_t\rho_t  + \nabla\cdot(\rho_t(-\nabla W*\rho_t + \xi_t)) - \frac{\epsilon}{2} \Delta \rho_t= 0.
	\end{equation}
The average control effort is approximately $\int_0^1 \int_{\mR^d} \frac{1}{2}\|\xi_t(x)\|^2 \rho_t(x) dx dt$. The initial and target configurations can both be modeled by probability distributions.
Thus, in the mean field limit, our density/distribution problem can be formulated as
	\begin{subequations}\label{eq:MFfluid}
	\begin{eqnarray}\label{eq:MFfluid1}
	\inf_{\rho, \xi} && \int_0^1 \int_{\mR^d} \frac{1}{2}\|\xi_t(x)\|^2 \rho_t(x) dx dt
	\\&&
	\partial_t\rho_t  + \nabla\cdot(\rho_t(-\nabla W*\rho_t + \xi_t))- \frac{\epsilon}{2} \Delta \rho_t = 0 \label{eq:MFfluid2}
	\\&&
	\rho_0 = \mu, \quad \rho_1 = \nu. \label{eq:MFfluid3}
	\end{eqnarray}
	\end{subequations}
One can view this as an optimal control problem for a dynamical system over the space of probability distributions with $\rho_t$ being the state. The dynamics is \eqref{eq:MFfluid2} with state $\rho_t$ and control $\xi_t$. The constraints \eqref{eq:MFfluid3} specify the initial and terminal states. We seek an optimal strategy with minimum control effort to steer the agents from an initial distribution $\mu$ to a target distribution $\nu$.

Using the Lagrangian duality method, one can derive a characterization of the solutions to \eqref{eq:MFfluid}. In particular, the optimal solution to \eqref{eq:MFfluid} can be characterized by the coupled PDEs
	\begin{subequations}\label{eq:Soptimality}
	\begin{eqnarray}\nonumber
		&& \partial_t \lambda + \frac{1}{2} \nabla \lambda ^T \nabla \lambda  -\nabla\lambda^T \nabla W*\rho_t
		\\&& - \int_{\mR^d} \rho_t(y) \nabla\lambda(y)^T \nabla W(y-x) dy +\frac{\epsilon}{2}\Delta \lambda= 0 \label{eq:Soptimality1}
		\\&&\label{eq:Soptimality2}
		\partial_t \rho_t +\nabla\cdot(\rho_t (-\nabla W*\rho_t  + \nabla\lambda)) - \frac{\epsilon}{2} \Delta\rho_t = 0
		\\&&
		\rho_{0} = \mu, \quad \rho_{1} = \nu \label{eq:Soptimality3},
	\end{eqnarray}
	\end{subequations}	
where $\lambda$ is the Lagrange multiplier associated with the continuity constraint \eqref{eq:MFfluid2}. The optimal control policy is a state feedback 
	\begin{equation*}
		\xi_t(x) = \nabla \lambda(t,x).
	\end{equation*}

There are several potential approaches to compute an optimal solution to the density control problem \eqref{eq:MFfluid}. For instance, the optimality condition \eqref{eq:Soptimality} can be viewed as the Pontryagin's principle for \eqref{eq:MFfluid} when \eqref{eq:MFfluid} is treated as an optimal control problem with state $\rho_t$ \cite{CheGeoPav21b}. The multiplier $\lambda$ then becomes the costate in the Pontryagin's principle \cite{LeeMar67,FleRis75}. To get a solution to \eqref{eq:Soptimality}, one can use indirect method such as shooting method \cite{LeeMar67,FleRis75} that is widely adopted for optimal control problems. However, due to the coupling between the state $\rho_t$ and the costate $\lambda$, and more importantly the fact that they are of infinite dimension, the shooting method maybe unstable and is not guaranteed to converge. Next we present a completely different approach to solve \eqref{eq:MFfluid} based on a reformulation. 
	
\subsection{Reformulation and Discretization}
For a given feedback policy $\xi_t$, in the mean field limit, the distribution $\rho_t$ of the individuals follows the McKean-Vlasov equation \eqref{eq:MV} and is deterministic. Moreover, the interaction between agents is of the form $-\frac{1}{N} \sum_{j=1}^N \nabla W(X_t^i -X_t^j)\approx -\nabla W*\rho_t$, which only depends on the group behavior. Thus, when $N$ is sufficiently large, the interactions between an agent with other agents becomes the interaction between the agent and the deterministic group distribution $\rho_t$. By the theory of propagation of chaos \cite{MelRoe87}, the $N$ agents become effectively independent to each other and each of them follows the same stochastic dynamics 
	\begin{equation}\label{eq:MFdynamics}
		dX_t = -[\nabla W*\rho_t](X_t) dt + \xi_t(X_t)dt+\sqrt{\epsilon} dB_t.
	\end{equation}

Denote by $\cP$ the distribution induced by \eqref{eq:MFdynamics} over the path space $\Omega=C([0,1],\mR^d)$, and by $\cQ(\cP)$ be distribution induced by the process
	\begin{equation}\label{eq:MFdynamicszero}
		dX_t = -\nabla W*\rho_t dt +\sqrt{\epsilon} dB_t,
	\end{equation}
then by the Girsanov theorem \cite{FleRis75,KarShr88}, following a similar argument as in the Schr\"odinger bridge problem \eqref{eq:SBmeasure}-\eqref{eq:daipra}, we obtain 
	\[
		{\rm KL}(\cP \| \cQ(\cP)) = \int_0^1 \int_{\mR^d} \frac{1}{2\epsilon}\|\xi_t(x)\|^2 \rho_t(x) dx dt.
	\]
Note that we used $\cQ(\cP)$ to emphasize the fact that $\cQ$ depends on the marginal flow of $\cP$, denoted by $(X_t)_\sharp \cP = \rho_t$. 

Consequently, the density control problem \eqref{eq:MFfluid} can be reformulated as
	\begin{subequations}\label{eq:MFSB}
	\begin{eqnarray}
		\min_{\cP} && {\rm KL}(\cP \| \cQ(\cP))
		\\
		&& (X_0)_\sharp \cP = \mu, \quad (X_1)_\sharp \cP = \nu.
	\end{eqnarray}
	\end{subequations} 	
This formulation \eqref{eq:MFSB} coincides with the mean field Schr\"odinger bridge problem \cite{BacConGenLeo20}. The equivalence between \eqref{eq:MFfluid} and \eqref{eq:MFSB} is rigorously justified in \cite{BacConGenLeo20}, extending the large deviation theory to interacting particle systems. The major difference between \eqref{eq:MFSB} and the standard Schr\"odinger bridge problem \eqref{eq:SBmeasure} lies in the fact that the prior distribution $\cQ$ in the former depends on the solution $\cP$, rendering a nonconvex optimization, in general, over the space of path distributions.

%

The optimization variable $\cP$ of \eqref{eq:MFSB} is of infinite dimension. To develop an implementable algorithm for \eqref{eq:MFSB}, we first discretize the problem in time $t_i = i/T,\,i=0,1,\ldots,T$ as well as in space over a grid. With this discretization, the path distribution $\cP$ becomes a $(T+1)$-dimensional tensor $\bM$ with $M(x_0,x_1,\ldots,x_T)$ representing the probability of the process $\cP$ goes through a neighborhood of $(X_{0} = x_0, X_{1/T} = x_1,\ldots, X_1 =x_T)$. In terms of $\bM$, the objective function ${\rm KL}(\cP \| \cQ(\cP))$ becomes
	 \[
	 	\langle \bC(\bM), \bM\rangle + \epsilon\langle \bM, \log \bM\rangle
	\]
where
	\[
		\langle \bM, \log \bM\rangle\! =\!\!\! \sum_{x_0,x_1,\ldots,x_T} M(x_0,x_1,\ldots, x_T) \log M(x_0,x_1,\ldots, x_T),
	\]
and $C(\bM)(x_0,x_1,\cdots,x_T)$ is the minimum control effort to drive the deterministic version ($\epsilon=0$) of \eqref{eq:MFdynamics} to go through the state $x_0,x_1,\cdots,x_T$. More explicitly, when the discretization grid is sufficiently fine, 
	\begin{equation*}
		C(\bM)(x_0,x_1,\cdots,x_T) \!=\!\! \sum_{i=0}^{T-1}\frac{T}{2} \|x_{i+1}-x_i+\frac{1}{T} [\nabla W * P_i(\bM)](x_i)\|^2,
	\end{equation*}
where $P_i(\bM)$ denotes the marginal of $\bM$ over $x_i$ and, by abuse of notation, $\nabla W*P_i(\bM)$ is a discretization of the convolution. 

Thus, after discretization, \eqref{eq:MFSB} becomes
	\begin{subequations}\label{eq:MOTform}
	\begin{eqnarray}
	\min_{\bM} && \langle \bC(\bM), \bM\rangle + \epsilon\langle \bM, \log \bM\rangle
	\\&&
	P_0(\bM) = \boldsymbol\mu, \quad P_T(\bM) = \boldsymbol\nu.
	\end{eqnarray}
	\end{subequations}
Here $\boldsymbol\mu$ and $\boldsymbol\nu$ denote the discretized version of $\mu$ and $\nu$ respectively. This formulation \eqref{eq:MOTform} is akin to the MOT problem except that the unit transport cost tensor $\bC$ now depends on the optimization variable $\bM$. This difference excludes the possibility of applying the Sinkhorn type algorithm directly to solve \eqref{eq:MOTform}. Next we develop an algorithm to compute the solution to \eqref{eq:MOTform} by sequentially linearizing $\langle \bC(\bM), \bM\rangle$ and then solving the resulting MOT problems.

\subsection{Proximal Sinkhorn Belief Propagation Algorithm}	
Denote $\Pi(\boldsymbol\mu,\boldsymbol\nu)$ the set of $\bM$ that is consistent with the marginals $\boldsymbol\mu, \boldsymbol\nu$ and $F(\bM) = \langle \bC(\bM), \bM\rangle$ then \eqref{eq:MOTform} reads
	\begin{equation}\label{eq:composite}
		\min_{\bM\in \Pi(\boldsymbol\mu,\boldsymbol\nu)} F(\bM) -\epsilon \cH(\bM).
	\end{equation}
This is a composite optimization over the probability simplex. We can thus apply the generalized proximal gradient descent algorithm to solve it. Surprisingly, when the Bregman divergence in \eqref{eq:gpg} is chosen to be the Kullback-Leibler divergence, each iteration of the algorithm on the problem \eqref{eq:composite} takes the form
	\begin{equation}\label{eq:proximal}
		\bM_{k+1}\! =\! \argmin_{\bM\in \Pi(\boldsymbol\mu,\boldsymbol\nu)}\! \langle \nabla F(\bM_k), \bM\rangle \!+\frac{1}{\eta} {\rm KL}(\bM\| \bM_k) \!- \epsilon \cH(\bM) 
	\end{equation}
where $\eta>0$ is the step size. Expanding the KL divergence term, the above becomes
		\begin{equation}\label{eq:proximalMOT}
		\bM_{k+1}\! =\! \argmin_{\bM\in \Pi(\boldsymbol\mu,\boldsymbol\nu)}\! \langle \nabla F(\bM_k)\!-\frac{1}{\eta}\!\log \bM_k,\! \bM\rangle\! -\! (\epsilon+\frac{1}{\eta}) \cH(\bM)
	\end{equation}
which is a standard entropy regularized multi-marginal optimal transport problem \eqref{eq:omt_multi_regularized} with cost tensor $\nabla F(\bM_k)-\frac{1}{\eta}\log \bM_k$.
\begin{prop}\label{prop:nablaF}
The gradient of $F(\bM) = \langle \bC(\bM), \bM\rangle$ is
	\begin{equation}\label{eq:nablaf}
		\nabla F(\bM) = \bC(\bM) + E(\bM)
	\end{equation}
where $E(\bM)(x_0,x_1,\ldots,x_T) = \sum_{i=0}^{T-1} E_i(x_i)$ with
	\begin{eqnarray}\nonumber
		E_{i} (y) &=& \sum_{x_i,x_{i+1}}\nabla W(x_i-y)^T[x_{i+1}-x_i+\frac{1}{T} \nabla W * P_i(\bM)]
		\\&&P_{i,i+1}(\bM)(x_i,x_{i+1}). \label{eq:Ei}
	\end{eqnarray}
\end{prop}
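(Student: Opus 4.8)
The plan is to treat $F$ as a smooth function on the ambient tensor space $\mR^{d_0\times\cdots\times d_T}$ — the simplex constraint in \eqref{eq:proximal} is handled by the proximal step, so only the Euclidean gradient of $F$ is needed — and to compute the partial derivative $\partial F/\partial M(\bx')$ with respect to a single entry indexed by $\bx'=(x_0',\ldots,x_T')$. Writing $F(\bM)=\sum_{\bx}C(\bM)(\bx)\,M(\bx)$ and applying the product rule splits the derivative into an \emph{explicit} contribution, in which only the factor $M(\bx)$ is differentiated, and an \emph{implicit} contribution, in which only $C(\bM)(\bx)$ is differentiated. I expect the first to reproduce the term $\bC(\bM)$ in \eqref{eq:nablaf} and the second to reproduce $E(\bM)$.

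The explicit part is immediate: $\partial M(\bx)/\partial M(\bx')=\delta_{\bx,\bx'}$, so $\sum_{\bx}C(\bM)(\bx)\,\partial M(\bx)/\partial M(\bx')=C(\bM)(\bx')$, which supplies the first summand of \eqref{eq:nablaf}. For the implicit part the key observation is that $C(\bM)(\bx)$ depends on $\bM$ only through the single-variable marginals $P_i(\bM)$ entering the discretized convolutions $g_i:=\nabla W*P_i(\bM)$. Since $P_i(\bM)(z)=\sum_{\bx:\,x_i=z}M(\bx)$, one has $\partial P_i(\bM)(z)/\partial M(\bx')=\mathbf{1}[x_i'=z]$, and hence, by the chain rule through the convolution, $\partial g_i(x_i)/\partial M(\bx')=\nabla W(x_i-x_i')$.

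Differentiating the quadratic $\tfrac{T}{2}\|x_{i+1}-x_i+\tfrac1T g_i(x_i)\|^2$ then yields, after the factor $T$ cancels the $\tfrac1T$, a term $[x_{i+1}-x_i+\tfrac1T g_i(x_i)]^T\nabla W(x_i-x_i')$ for each $i$. Summing these over $i$ and weighting by $M(\bx)$, I observe that the summand for each $i$ depends on $\bx$ only through the consecutive pair $(x_i,x_{i+1})$; marginalizing out all remaining coordinates replaces $\sum_{\bx}M(\bx)(\cdots)$ by $\sum_{x_i,x_{i+1}}P_{i,i+1}(\bM)(x_i,x_{i+1})(\cdots)$. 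This is exactly $\sum_i E_i(x_i')$ with $E_i$ as in \eqref{eq:Ei}, so $E(\bM)(\bx')=\sum_{i=0}^{T-1}E_i(x_i')$, completing the identification.

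The computation is otherwise routine; the step requiring the most care is the bookkeeping in the implicit contribution — correctly isolating that $C(\bM)(\bx)$ depends on $\bM$ solely through the marginals $P_i(\bM)$ (so the convolution derivative collapses to a clean $\nabla W(x_i-x_i')$), and then recognizing that after marginalization the result depends on $\bx'$ only through the individual coordinates $x_i'$. This last point — the decomposition of $E(\bM)$ into a sum of single-variable functions $E_i(x_i)$ — is what matters downstream, since it guarantees that $\nabla F(\bM_k)$ inherits the nearest-neighbor chain structure of $\bC$, and hence each proximal subproblem \eqref{eq:proximalMOT} remains a graph-structured MOT solvable by Algorithm~\ref{alg_iterative_scaling}.
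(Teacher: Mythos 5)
Your proposal is correct and follows essentially the same route as the paper's proof: both split the derivative of $F(\bM)=\langle \bC(\bM),\bM\rangle$ via the product rule into the explicit term $\bC(\bM)$ and the implicit term $E(\bM)$, exploit that $\bC(\bM)$ depends on $\bM$ linearly through the marginals $P_i(\bM)$ entering the convolution, and collapse the weighting by $\bM$ to the pairwise marginals $P_{i,i+1}(\bM)$. The only difference is cosmetic --- you compute entrywise partial derivatives $\partial F/\partial M(\bx')$, while the paper phrases the same computation as a first variation $\langle \nabla F(\bM),\delta\bM\rangle$ against an arbitrary perturbation $\delta\bM$.
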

\begin{proof}
By definition, 
	\begin{equation*}
		F(\bM+\delta \bM) - F(\bM) \approx \langle \nabla F(\bM), \delta \bM\rangle.
	\end{equation*}
It follows that
	\begin{eqnarray*}
		&&\langle \nabla F(\bM), \delta \bM\rangle = \langle \bC(\bM), \delta \bM\rangle
		\\&&\hspace*{-0.9cm}+ \sum_{i=0}^{T-1} T\langle  \frac{1}{T} (\nabla W * P_i(\delta \bM))^T(x_{i+1}-x_i\!+\!\frac{1}{T} \nabla W * P_i(\bM)),\bM\rangle. 
	\end{eqnarray*}
The second term on the right hand side equals
	\begin{eqnarray*}
		&&\hspace*{-1cm} \sum_{i=0}^{T-1} \langle  (\nabla W * P_i(\delta \bM))^T(x_{i+1}\!-x_i\!+\frac{1}{T} \nabla W * P_i(\bM)),P_{i,i+1}(\bM)\rangle
		\\&=& \sum_{i=0}^{T-1}\langle  \delta \bM, E_{i} (x_i) \rangle
	\end{eqnarray*}
where $E_i$ is as in \eqref{eq:Ei}.
Hence,
	\begin{equation*}
		\langle \nabla F(\bM), \delta \bM\rangle = \langle \bC(\bM), \delta \bM\rangle + \langle E(\bM), \delta \bM\rangle,
	\end{equation*}
with $E(\bM)(x_0,x_1,\ldots,x_T) = \sum_{i=0}^{T-1} E_i(x_i)$, and therefore
	 \begin{equation*}
	 	\nabla F(\bM) = \bC(\bM) + E(\bM).
	 \end{equation*}
This completes the proof.	
\end{proof}

Plugging \eqref{eq:nablaf} into \eqref{eq:proximalMOT} yields the proximal gradient iteration
	\begin{eqnarray}\nonumber
		\bM_{k+1} &=& \argmin_{\bM\in \Pi(\boldsymbol\mu,\boldsymbol\nu)} \langle \bC(\bM_k)+E(\bM_k)-\frac{1}{\eta} \log \bM_k, \bM\rangle 
		\\&& -(\epsilon + \frac{1}{\eta})\cH(\bM).\label{eq:proximaliter}
	\end{eqnarray}
Note that both $\bC(\bM_k)$ and $E(\bM_k)$ have a graphical structure associated with the line graph (Figure \ref{fig:graphical}). Thus, assuming $\bM_k$ has the same graphical structure, the solution $\bM_{k+1}$ to \eqref{eq:proximaliter} also has a graphical structure corresponding to the line graph. Therefore, with proper initialization, each iteration \eqref{eq:proximaliter} can be solved efficiently using the Sinkhorn Belief Propagation algorithm (Algorithm \ref{alg_iterative_scaling}). We thus establish our Proximal Sinkhorn Belief Propagation algorithm (Algorithm \ref{alg:proximalsinkhorn}) to solve \eqref{eq:MOTform}.
\begin{algorithm*}[tb]
   \caption{Proximal Sinkhorn Belief Propagation algorithm}
   \label{alg:proximalsinkhorn}
\begin{algorithmic}
   \STATE Input: cost tensor $\bC$, regularization $\epsilon$, stepsize $\eta$, number of iterations $K$
   \STATE Initialize $\bM_1$ to be a uniform probability vector
   \FOR{$k = 1, 2, 3, 
   \ldots,K$}
        \STATE Compute $\bC(\bM_k)+E(\bM_k)-\frac{1}{\eta} \log \bM_k$
        \STATE Solve \eqref{eq:proximaliter} using the Sinkhorn Belief Propagation algorithm to obtain $\bM_{k+1}$
    \ENDFOR
\end{algorithmic}
\end{algorithm*}
	\begin{figure}[h]
	\centering
	\begin{tikzpicture}[scale=0.95, every node/.style={scale=0.95}]
     
       \tikzstyle{bluenode}=[draw, circle, minimum size=9mm, inner sep=1pt, fill=blue!25];
       \tikzstyle{bluenode1}=[draw, circle, minimum size=9mm, inner sep=1pt, fill=blue!1];
  \tikzstyle{edge}=[>=stealth, thick, black] 
  
  \tikzstyle{ref}=[draw, circle, minimum size=0.2em, inner sep=1pt, color = white, fill=white];

\node [bluenode] at (2,0) (b1) {$x_0$};
\node [bluenode1] at (4,0) (b2) {$x_1$};
\node [bluenode1] at (6,0) (b3) {$x_{T-1}$};
\node [bluenode] at (8,0) (b4) {$x_T$};

\draw[edge] (b1) to (b2);
\draw[loosely dotted, very thick] (b2) -- (b3); 
\draw[edge] (b3) to (b4);
\end{tikzpicture}
	\caption{Graph for the graphical OT \eqref{eq:proximaliter}}
	\label{fig:graphical}
	\end{figure}
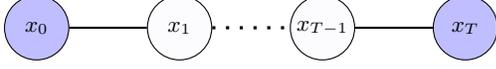
\begin{remark}
The Proximal Sinkhorn Belief Propagation algorithm inherits the convergent properties of proximal gradient algorithm and converges to a solution with sublinear rate $\cO(1/k)$. Note that the problem \eqref{eq:MOTform} is in general non-convex and thus the convergence is to a local solution. Each iteration of our algorithm requires solving a graphical OT problem using the Sinkhorn Belief Propagation algorithm. Let $D$ be the number of discretized grid points over space, then the complexity of the Sinkhorn Belief Propagation is $\cO(D^2 T)$. 
\end{remark}
\begin{remark}
In the limit case where $\epsilon=0$, the stochastic disturbance in the dynamics vanishes and agents become deterministic. Note that Algorithm \ref{alg:proximalsinkhorn} applies to this deterministic setting.
\end{remark}
	
	
\subsection{Optimal control strategy}
The optimal control policy is $\xi_t(x) = \nabla \lambda(t,x)$.
Once Algorithm \ref{alg:proximalsinkhorn} converges, the corresponding control policy can be recovered by solving the linear equation
	\[
		\partial_t\rho_t + \nabla\cdot(\rho_t(-\nabla W*\rho_t + \nabla \lambda(t,\cdot))) - \frac{\epsilon}{2} \Delta \rho_t = 0.
	\]
More specifically, 
	\[
		\partial_t\rho_t  + \nabla\cdot(\rho_t(-\nabla W*\rho_t))- \frac{\epsilon}{2} \Delta \rho_t
	\]
can be estimated using the solution $\bM^\star$ to \eqref{eq:MOTform}, denoted by $g_t$. It follows that $\lambda$ can be recovered by solving the linear equation
	\[
		 \nabla\cdot(\rho_t \nabla\lambda(t,\cdot)) = -g_t,
	\]
or more precisely the least square problem
	\[
		\min_{\lambda} \|\nabla\cdot(\rho_t \nabla\lambda(t,\cdot))+g_t\|^2.
	\]

An alternative approach is based on the fact that the optimal $\cP$ is associated with the stochastic process
	\[
		dX_t = -\nabla W*\cP_t dt + \nabla\lambda(t,X_t)dt+\sqrt{\epsilon} dB_t.
	\]
The joint distribution of $X_{i/T}$ and $X_{(i+1)/T}$ of this process is approximately
	\begin{eqnarray*}
		&&\cP_{i/T}(X_{i/T}) \cN (X_{(i+1)/T}; X_{i/T}
		\\&&+[-(\nabla W*\cP_{i/T})(X_{i/T})+\nabla\lambda(i/T,X_{i/T})]/T,\epsilon/T).
	\end{eqnarray*}
On the other hand, it is approximated by $P_{i,i+1}(\bM^\star)$. Combining these two expressions we can solve $\lambda$ and thus the optimal control policy. 

\subsection{Extension to general dynamics and cost}
In the above discussions, to better illustrate our density control framework for interacting agent systems, we have restricted our attention to the simple dynamics \eqref{eq:particle}. Now we extend this framework to more general dynamics\footnote{The dependence of $b, \sigma$ over time is suppressed to simplify the notation.}
	\begin{eqnarray}\nonumber
		dX_t^i \!\!&=&\!\! -\frac{1}{N} \sum_{j=1}^N \nabla W(X_t^i \!-\!X_t^j) dt \!+\! b(X_t)dt \!+\! \sigma (u_t^idt+\sqrt{\epsilon}dB_t^i)
		\\&& i = 1,\ldots, N,\label{eq:particlegeneral}
	\end{eqnarray}
where $b(\cdot)\in\mR^d$ is a continuous drift term and $\sigma \in \mR^{d\times p}$ is the input matrix,
and more general cost function
	\begin{equation*}
		\int_0^1 \int_{\mR^d} [\frac{1}{2}\|\xi_t(x)\|^2 +V(x)]\rho_t(x) dx dt.
	\end{equation*}
In the mean field limit, the density control problem can be formulated as 
	\begin{subequations}\label{eq:Gfluid}
	\begin{eqnarray}\label{eq:Gfluid1}
	\inf_{\rho, \xi} &&\!\!\!\!\!\!\!\!\!\! \int_0^1 \int_{\mR^d} [\frac{1}{2}\|\xi_{t}(x)\|^2 + V(x)] \rho_{t}(x) dx dt
	\\&&\!\!\!\!\!\!\!\!\!\!\!\!\!\!\!
	\partial_t\rho_{t} \!+\! \nabla\!\cdot\!(\rho_{t}(\!-\!\nabla W\!*\!\rho_{t} \!+\! b\!+\!\sigma \xi_{t})) \!\!- \!\frac{\epsilon}{2}\!\! \sum_{i,k}\! \frac{\partial^2 (a_{ik}\rho_{t})}{\partial x_i \partial x_k}\! =\! 0 \label{eq:Gfluid2}
	\\&&\!\!\!\!\!\!\!\!\!\!\!\!
	\rho_{0} = \mu, \quad \rho_{1} = \nu \label{eq:Gfluid3}
	\end{eqnarray}
	\end{subequations}
where $a = \sigma\sigma^T$. 
	
The optimal strategy of \eqref{eq:Gfluid} is 
	\begin{equation*}
		\xi_t(x) = \sigma^T\nabla\lambda(t,x)
	\end{equation*}
where $\lambda$ solves the PDEs
	\begin{subequations}\label{eq:Goptimality}
	\begin{eqnarray}\nonumber
		&& \partial_t \lambda + \frac{1}{2} \nabla \lambda ^T a \nabla \lambda -V+\nabla\lambda^T b-\nabla\lambda^T \nabla W*\rho
		\\&&\hspace{-0.9cm} - \int \rho(y) \nabla\lambda(y)^T \nabla W(y-x) dy +\frac{\epsilon}{2}\tr(a\nabla^2\lambda) = 0 \label{eq:Goptimality1}
		\\&&\label{eq:Goptimality2}
		\hspace{-0.9cm}\partial_t \rho_t\!\! +\!\nabla\!\cdot\!(\rho_t (\!-\nabla W*\rho_t \! +\! b\!+\!a\nabla\lambda)) \!-\! \frac{\epsilon}{2} \sum_{i,k} \frac{\partial^2 (a_{ik}\rho_{t})}{\partial x_i \partial x_k} \!=\! 0
		\\&&\hspace{-0.9cm}
		\rho_{0} = \mu, \quad \rho_{1} = \nu \label{eq:Goptimality3}.
	\end{eqnarray}
	\end{subequations}	
	
Following similar arguments as before we obtain an alternative formulation
	\begin{subequations}\label{eq:GSB}
	\begin{eqnarray}
		\min_{\cP} && \epsilon{\rm KL}(\cP \| \cQ(\cP)) + \int V d\cP
		\\
		&& (X_0)_\sharp \cP = \mu, \quad (X_1)_\sharp \cP = \nu,
	\end{eqnarray}
	\end{subequations} 
where $\cQ(\cP)$ is the distribution over the path space associated with the diffusion process 
	\begin{equation}\label{eq:Gdynamics}
		dX_t = -\nabla W*\cP_t dt + b(X_t) dt + \sqrt{\epsilon}\sigma dB_t.
	\end{equation}
The same as \eqref{eq:MOTform}, after discretization over space and time, the problem can be written as  
	\begin{equation}\label{eq:MOTgeneral}
	\min_{\bM\in\Pi(\boldsymbol\mu,\boldsymbol\nu)}  \langle \bC(\bM), \bM\rangle + \epsilon\langle \bM, \log \bM\rangle,
	\end{equation}
but with a slightly different cost tensor
	\begin{eqnarray}\nonumber
		&&C(\bM)(x_0,x_1,\cdots,x_T) = \frac{1}{T}\sum_{i=0}^{T-1}V(x_i)
		\\&&+\sum_{i=0}^{T-1}\frac{T}{2} \|x_{i+1}-x_i+\frac{1}{T} \nabla W * P_i(\bM)-\frac{1}{T} b(x_i)\|^2.\label{eq:CV}
	\end{eqnarray}

Let $F(\bM) = \langle \bC(\bM), \bM\rangle$, then the above becomes a composite optimization \eqref{eq:composite} and can be solved using the proximal gradient algorithm. The derivation is similar to that of Proposition \ref{prop:nablaF} and is omitted. 
\begin{prop}\label{prop:nablaF2}
The gradient of $F(\bM) = \langle \bC(\bM), \bM\rangle$ with $\bC$ in \eqref{eq:CV} is
	\begin{equation*}
		\nabla F(\bM) = \bC(\bM) + E(\bM)
	\end{equation*}
where $E(\bM)(x_0,x_1,\ldots,x_T) = \sum_{i=0}^{T-1} E_i(x_i)$ with
	\begin{eqnarray*}
		E_{i} (y) &=& \sum_{x_i,x_{i+1}}\nabla W(x_i-y)^T[x_{i+1}-x_i+\frac{1}{T} \nabla W * P_i(\bM)
		\\&&-\frac{1}{T} b(x_i)]P_{i,i+1}(\bM)(x_i,x_{i+1}).
	\end{eqnarray*}
\end{prop}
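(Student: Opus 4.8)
The plan is to mirror the proof of Proposition~\ref{prop:nablaF} verbatim, since the only structural changes in passing from \eqref{eq:Ei} to the present claim are the replacement of the residual vector $x_{i+1}-x_i+\frac{1}{T}\nabla W*P_i(\bM)$ by $x_{i+1}-x_i+\frac{1}{T}\nabla W*P_i(\bM)-\frac{1}{T}b(x_i)$, and the extra additive summand $\frac{1}{T}\sum_i V(x_i)$ in the cost \eqref{eq:CV}. I would begin, exactly as before, from the first-order expansion $F(\bM+\delta\bM)-F(\bM)\approx\langle\nabla F(\bM),\delta\bM\rangle$ and separate the two sources of variation in $\langle\bC(\bM),\bM\rangle$: the explicit dependence through the outer factor $\bM$, which yields $\langle\bC(\bM),\delta\bM\rangle$, and the implicit dependence of $\bC(\bM)$ on $\bM$ through the marginals $P_i(\bM)$ appearing inside the convolution.

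The key observation to record first is that both new ingredients are inert under the implicit variation. The potential $V$ and the drift $b$ are fixed functions of the state coordinates and do not depend on any marginal of $\bM$; hence $\frac{1}{T}\sum_i V(x_i)$ and the $-\frac{1}{T}b(x_i)$ term inside the quadratic contribute nothing to the derivative of $\bC(\bM)$ with respect to $\bM$. Consequently the only $\bM$-dependent quantity inside $\bC(\bM)$ is still $\nabla W*P_i(\bM)$, precisely as in Proposition~\ref{prop:nablaF}, so the implicit part of the variation arises solely through the chain rule applied to this convolution, while $V$ and $b$ simply ride along inside the explicit term $\langle\bC(\bM),\delta\bM\rangle$.

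Next I would carry out that chain-rule computation. Differentiating the $i$-th quadratic term $\frac{T}{2}\|x_{i+1}-x_i+\frac{1}{T}\nabla W*P_i(\bM)-\frac{1}{T}b(x_i)\|^2$ in the direction $\delta\bM$ produces $T$ times the inner product of the perturbation $\frac{1}{T}\nabla W*P_i(\delta\bM)$ with the residual vector $x_{i+1}-x_i+\frac{1}{T}\nabla W*P_i(\bM)-\frac{1}{T}b(x_i)$, so that the prefactors $T$ and $1/T$ cancel. Contracting this against the outer tensor $\bM$ and marginalizing over all coordinates except $(x_i,x_{i+1})$ replaces $\bM$ by $P_{i,i+1}(\bM)$. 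Finally, expanding the convolution $\nabla W*P_i(\delta\bM)$ and regrouping the double sum so as to collect the factor $P_i(\delta\bM)$ identifies the coefficient of $P_i(\delta\bM)$ as exactly $E_i$ in the stated form; summing over $i$ and rewriting $\langle P_i(\delta\bM),E_i\rangle=\langle\delta\bM,E_i(x_i)\rangle$ yields $\langle E(\bM),\delta\bM\rangle$ with $E(\bM)(\bx)=\sum_i E_i(x_i)$, whence $\nabla F(\bM)=\bC(\bM)+E(\bM)$.

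The only point requiring care — and the natural place for the argument to slip — is the bookkeeping in this regrouping step: one must track that the perturbed marginal $P_i(\delta\bM)$ enters through the convolution variable, so that $\nabla W(x_i-y)$ appears with $y$ the varied coordinate and $x_i$ the summed coordinate, matching the argument order in the stated $E_i$. Since neither $V$ nor $b$ disturbs this structure, the computation is genuinely identical to that of Proposition~\ref{prop:nablaF} up to the single additional $-\frac{1}{T}b(x_i)$ term inside the residual, and combining the explicit and implicit contributions completes the proof.
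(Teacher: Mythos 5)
Your proposal is correct and follows exactly the route the paper intends: the paper omits this proof, stating only that ``the derivation is similar to that of Proposition~\ref{prop:nablaF},'' and your argument is precisely that derivation, with the correct observation that $V$ and $b$ are independent of $\bM$ and therefore contribute only to the explicit term $\langle \bC(\bM),\delta\bM\rangle$, while the implicit (chain-rule) variation through $\nabla W * P_i(\bM)$ proceeds verbatim as before with the residual augmented by $-\frac{1}{T}b(x_i)$. Your bookkeeping of the convolution variable (so that $\nabla W(x_i-y)$ carries the varied coordinate $y$ and the pair $(x_i,x_{i+1})$ is contracted against $P_{i,i+1}(\bM)$) matches the stated $E_i$, so nothing is missing.
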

The proximal Sinkhorn belief propagation algorithm can be applied directly to solve \eqref{eq:MOTgeneral} with a small modification on the expression of $E(\bM)$ as in Proposition \ref{prop:nablaF2}. 

\section{Density control with multiple species}\label{sec:multi}
In this section, we extend our density control framework to account for the collective dynamics with multiple species. Consider a group of individuals comprised of $L$ species and each has $N$ agents. The dynamics of the $i$-th agents in the $\ell$-th species is
	\begin{eqnarray*}
		dX_{\ell,t}^i &=& -\frac{1}{N} \sum_{m=1}^L \sum_{j=1}^N \nabla W_{\ell m}(X_{\ell,t}^i -X_{m,t}^j) dt + b_\ell(X_{\ell,t}^i)dt
		\\&& + \sigma(u_{\ell,t}^idt+\sqrt{\epsilon} dB_{\ell,t}^i),\quad i = 1,\ldots, N,
	\end{eqnarray*}
where $X_{\ell,t}^i$ and $u_{\ell,t}^i$ denote the state and control of the $i$-th agents in the $\ell$-th species respectively. The interaction potential between species $\ell$ and species $m$ is assumed to be continuously differentiable and symmetric in the sense $W_{\ell m} (x) = W_{\ell m}(-x) =W_{m\ell} (x) = W_{m\ell}(-x)$. We seek $L$ feedback policies, one for each species, such that, when they are adopted by the individuals, the group would be transformed from one configuration to another.  

In the mean field region, denoting the initial distribution/configuration of species $\ell$ by $\mu_\ell$, and its target distribution by $\nu_\ell$, this density control problem can be formulated as 
	\begin{subequations}\label{eq:MSfluid}
	\begin{eqnarray}\label{eq:MSfluid1}
	\inf_{\rho_1,\ldots,\rho_L, \xi_1,\ldots,\xi_L} &&\hspace{-0.7cm} \sum_{\ell=1}^L\int_0^1 \int_{\mR^d} [\frac{1}{2}\|\xi_{\ell,t}(x)\|^2 + V_\ell (x)] \rho_{\ell,t}(x) dx dt
	\\&&\hspace{-0.7cm} \nonumber
	\partial_t\rho_{\ell,t}\! +\! \nabla\!\cdot\!(\!\rho_{\ell,t}(\!-\!\sum_{m} \nabla W_{\ell m}*\rho_{m,t} + b_\ell+\sigma\xi_{\ell,t})) 
	\\&&\hspace{-0.5cm} - \frac{\epsilon}{2} \sum_{i,k} \frac{\partial^2 (a_{ik}\rho_{\ell,t})}{\partial x_i \partial x_k} = 0,~\ell = 1,2,\ldots, L \label{eq:MSfluid2}
	\\&&\hspace{-0.7cm} 
	\rho_{\ell,0} = \mu_\ell, \quad \rho_{\ell,1} = \nu_\ell, ~\ell = 1,2,\ldots, L, \label{eq:MSfluid3}
	\end{eqnarray}
	\end{subequations}
where $\rho_{\ell,t}$ denotes the distribution of the $\ell$-th species. 
The constraints \eqref{eq:MSfluid2} is a generalization of \eqref{eq:Gfluid2} to the multi-species setting describing the distribution evolution of each species. 	
The optimal solution to \eqref{eq:MSfluid} can be characterized by the PDEs
	\begin{subequations}\label{eq:optimality}
	\begin{eqnarray}\label{eq:optimality1}
		&&\hspace{-0.5cm} \partial_t \lambda_\ell + \frac{1}{2} \nabla \lambda_\ell ^T a \nabla \lambda_\ell \!-\!V_\ell +\nabla \lambda_\ell^T b_\ell \!-\!\nabla\lambda_\ell^T \sum_m \nabla W_{\ell m}\! *\!\rho_{m,t} 
		\\&&\hspace{-0.5cm}\nonumber\!-\! \sum_m \int \rho_{m,t}(y) \nabla\lambda_m(y)^T \nabla W_{m\ell} (y-x) dy \!+\!\frac{\epsilon}{2}\tr(a\nabla^2 \lambda_\ell) = 0
		\\\nonumber&&\hspace{-0.5cm}
		\partial_t \rho_{\ell,t}  +\nabla\cdot(\rho_{\ell,t} (-\sum_m \nabla W_{\ell m}*\rho_m +b_\ell + a \nabla\lambda_\ell))
		\\&&\label{eq:optimality2} - \frac{\epsilon}{2} \sum_{i,k} \frac{\partial^2 (a_{ik}\rho_{\ell,t})}{\partial x_i \partial x_k}= 0
		\\&&\hspace{-0.5cm}
		\rho_{\ell,0} = \mu_\ell, \quad \rho_{\ell,1} = \nu_\ell \label{eq:optimality3}
	\end{eqnarray}
	\end{subequations}	
for all $\ell = 1,2,\ldots, L$. Here $\lambda_1,\lambda_2,\ldots,\lambda_L$ are Lagrange multipliers associated with the constraints \eqref{eq:MSfluid2} for $\ell = 1,2,\ldots, L$. The corresponding optimal control policy for the $\ell$-th species is 
	\begin{equation*}
		u_{\ell,t} = \sigma^T \nabla \lambda_\ell(t, X_{\ell,t}). 
	\end{equation*}

To develop an efficient algorithm for \eqref{eq:MSfluid}, we reformulate it as an optimization over the path measures. More specifically, denote by $\cP^\ell$ the distribution on the path space induced by species $\ell$, then following a similar argument as before, we obtain the following reformulation 
	\begin{subequations}\label{eq:MSSB}
	\begin{eqnarray}
		\hspace{-0.1cm}\min_{\cP^1,\ldots,\cP^L} &&\!\!\!\!\!\!\!\! \sum_{\ell=1}^L \left\{\epsilon{\rm KL}(\cP^\ell \| \cQ^\ell(\cP^1,\cdots,\cP^L))+\int V_\ell d\cP^\ell\right\}
		\\
		&&\!\!\!\!\!\!\!\! (X_0)_\sharp \cP^\ell = \mu_\ell, (X_1)_\sharp \cP^\ell = \nu_\ell,\ell = 1,2,\ldots, L.
	\end{eqnarray}
	\end{subequations} 
Here the distribution $\cQ^\ell$ is induced by the diffusion process
	\begin{equation*}
		dX_t = -\sum_m [\nabla W_{\ell m}*\cP_{t}^m](X_t) dt+ b_\ell(X_t)dt + \sqrt{\epsilon}\sigma dB_t,
	\end{equation*}
which clearly depends on $\cP^1,\cP^2,\cdots,\cP^L$.

Discretizing the problem over space and time, the optimization variables become $L$ tensors $\bM^\ell, \ell = 1,2,\ldots, L$ and the optimization problem becomes
	\begin{subequations}\label{eq:LM}
	\begin{eqnarray}
	\hspace{-0.4cm}\min_{\bM^1,\cdots,\bM^L} &&\hspace{-0.7cm} \sum_{\ell=1}^L \{\langle \bC^\ell(\bM^1,\ldots,\bM^L), \bM^\ell\rangle + \epsilon\langle \bM^\ell, \log \bM^\ell\rangle\}
	\\&&\hspace{-0.7cm}
	P_0(\bM^\ell) = \mu_\ell, \quad P_T(\bM^\ell) = \nu_\ell,  ~\ell = 1,2,\ldots, L,
	\end{eqnarray}
	\end{subequations}
where the cost tensor for the $\ell$-th species is
	\begin{eqnarray*}
		&&C^\ell(\bM^1,\ldots,\bM^L)(x_0,x_1,\cdots,x_T) = \sum_{i=0}^{T-1}\frac{1}{T}V_\ell(x_i)
		\\&&\hspace{-0.3cm}+\sum_{i=0}^{T-1}\frac{T}{2} \|x_{i+1}-x_i+\frac{1}{T} \sum_m\nabla W_{\ell m} * P_i(\bM^m)-\frac{1}{T} b_\ell(x_i)\|^2.
	\end{eqnarray*}

A more compact form of the above problem can be obtained by combining the $L$ optimization variables $\bM^1,\bM^2,\cdots,\bM^L$ into a single variable $\bM$. More precisely, we denote by $\bM$ the $T+2$ dimensional tensor where the index for the first dimension is $\ell$. Similarly, we combine $\bC^1,\bC^2,\ldots,\bC^L$ into $\bC$ where
	\begin{eqnarray}\label{eq:Cmulti}
		&&C(\bM)(\ell,x_0,x_1,\cdots,x_T) = \sum_{i=0}^{T-1}\frac{1}{T}V_\ell(x_i)+
		\\&&\hspace{-0.7cm}\sum_{i=0}^{T-1}\frac{T}{2} \|x_{i+1}\!-\!x_i\!+\!\frac{1}{T} \sum_m[\nabla W_{\ell m} *P_{-1,i}(\bM)](\ell, x_i)\!-\!\frac{1}{T} b_\ell(x_i)\|^2.\nonumber
	\end{eqnarray}
In the above, we adopt an unconventional notation $P_{-1,i}(\bM)$ to denote the marginal of $\bM$ over $(\ell, x_i)$. In terms of $\bM, \bC$, the above optimization \eqref{eq:LM} can be rewritten as
	\begin{subequations}\label{eq:MOTmulti}
	\begin{eqnarray}\label{eq:MOTmulti1}
	\min_{\bM} && \langle \bC(\bM), \bM\rangle + \epsilon\langle \bM, \log \bM\rangle
	\\&&\label{eq:MOTmulti2}
	P_{-1,0}(\bM)= \boldsymbol\mu, \quad P_{-1,T}(\bM) =\boldsymbol\nu
	\end{eqnarray}
	\end{subequations}
with $\boldsymbol\mu=[\boldsymbol\mu_1,\ldots,\boldsymbol\mu_L]^T$ and $\boldsymbol\nu=[\boldsymbol\nu_1,\ldots,\boldsymbol\nu_L]^T$.
	
Clearly, \eqref{eq:MOTmulti} is akin to \eqref{eq:MOTform}. We now utilize the proximal gradient descent to solve it. Denote the set of $\bM$ satisfying the constraints \eqref{eq:MOTmulti2} by $\Pi(\boldsymbol\mu,\boldsymbol\nu)$ and $F(\bM) =\langle \bC(\bM), \bM\rangle$, then each iteration of the proximal gradient descent reads
	\begin{eqnarray}\nonumber
		\bM_{k+1} &=& \argmin_{\bM\in \Pi(\boldsymbol\mu,\boldsymbol\nu)} \langle \nabla F(\bM_k)-\frac{1}{\eta}\log \bM_k, \bM\rangle 
		\\&&- (\epsilon+\frac{1}{\eta}) \cH(\bM). \label{eq:Mkiter}
	\end{eqnarray}
\begin{prop}\label{prop:nablaF3}
The gradient of $F(\bM) = \langle \bC(\bM), \bM\rangle$ with $\bC$ in \eqref{eq:Cmulti} is
	\begin{equation}\label{eq:nablaFM}
		\nabla F(\bM) = \bC(\bM) + E(\bM)
	\end{equation}
where $E(\bM)(\ell,x_0,x_1,\ldots,x_T) = \sum_{i=0}^{T-1} E_i(\ell,x_i)$ with 
	\begin{eqnarray}\label{eq:Eil}
		&&E_{i} (\ell,y) = \sum_{m,x_i,x_{i+1}}\nabla W_{\ell m}(x_i-y)^T[x_{i+1}-x_i+
		\\&&\hspace{-0.4cm}\frac{1}{T} \sum_n\nabla W_{\ell n} * P_{-1,i}(\bM)-\frac{1}{T} b_\ell(x_i)]P_{-1,i,i+1}(\bM)(\ell,x_i,x_{i+1}).\nonumber
	\end{eqnarray}
\end{prop}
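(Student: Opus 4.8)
The plan is to repeat the first-variation computation used for Proposition~\ref{prop:nablaF} and Proposition~\ref{prop:nablaF2}, now carried out in the combined-tensor notation of \eqref{eq:Cmulti}. Writing $F(\bM)=\sum_{\ell}\langle \bC^\ell(\bM^1,\ldots,\bM^L),\bM^\ell\rangle$ and expanding $F(\bM+\delta\bM)-F(\bM)=\langle\nabla F(\bM),\delta\bM\rangle+o(\|\delta\bM\|)$, I would separate the first-order variation into two pieces: the contribution from the explicit factor $\bM^\ell$ inside each inner product, which reproduces the term $\bC(\bM)$ exactly as in \eqref{eq:nablaf}, and the contribution coming from the dependence of each cost tensor $\bC^\ell$ on $\bM$. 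The claimed identity \eqref{eq:nablaFM} then reduces to showing that the second piece equals $\langle E(\bM),\delta\bM\rangle$ with $E$ of the stated separable form.

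For that second piece I would use that $\bC^\ell$ depends on $\bM$ only through the time-$i$ spatial marginals $P_{-1,i}(\bM)$ of all species, which enter the quadratic residual through the convolutions $\nabla W_{\ell m}\!*\!P_{-1,i}(\bM)$. Differentiating the squared norm $\tfrac{T}{2}\|x_{i+1}-x_i+\tfrac{1}{T}\sum_m\nabla W_{\ell m}\!*\!P_{-1,i}(\bM)-\tfrac{1}{T}b_\ell(x_i)\|^2$, and noting that the variation of the convolution term carries the coefficient $\tfrac{1}{T}\nabla W_{\ell m}(x_i-\cdot)$, the first-order change of $C^\ell(\bM)(x_0,\ldots,x_T)$ becomes a sum over $i$ and $m$ of $(r_i^\ell)^{T}\,\nabla W_{\ell m}\!*\!P_{-1,i}(\delta\bM)$, where $r_i^\ell$ denotes the residual for species $\ell$. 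Since the potential $V_\ell$ and the drift $b_\ell$ do not depend on $\bM$, they drop out of the gradient.

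Then I would collect terms exactly as in the proof of Proposition~\ref{prop:nablaF}. Because $r_i^\ell$ depends on $(x_0,\ldots,x_T)$ only through the pair $(x_i,x_{i+1})$, marginalizing the explicit weight $\bM^\ell$ against it produces the pairwise marginals $P_{-1,i,i+1}(\bM)(\ell,x_i,x_{i+1})$; rewriting $P_{-1,i}(\delta\bM)$ as a partial sum over the entries of $\delta\bM$ then lets me read off the coefficient of each $\delta M(\ell,x_0,\ldots,x_T)$, which has the separable form $\sum_i E_i(\ell,x_i)$. Matching this against \eqref{eq:Eil} yields $E(\bM)(\ell,x_0,\ldots,x_T)=\sum_{i=0}^{T-1}E_i(\ell,x_i)$ and hence \eqref{eq:nablaFM}.

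The one genuinely new obstacle relative to the single-species Propositions~\ref{prop:nablaF} and \ref{prop:nablaF2} is the cross-species bookkeeping. Perturbing the marginal of a single species alters the transport cost of \emph{every} species simultaneously, so the indirect term is itself a sum over species of residual–potential products, and one must track carefully which species label attaches to the residual $r_i^\ell$ and to the weighting marginal $P_{-1,i,i+1}(\bM)$ versus the perturbed species carried by the first argument of $E$. Organizing these contributions into the form \eqref{eq:Eil} is precisely where the symmetry assumption $W_{\ell m}=W_{m\ell}$ (equivalently $\nabla W_{\ell m}=\nabla W_{m\ell}$) is invoked to pair the coupling potentials; once the indices are matched the remaining manipulations coincide with those already carried out for \eqref{eq:Ei}, so I would indicate this reduction rather than repeat the algebra.
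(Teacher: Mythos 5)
Your proposal is correct and follows essentially the same route as the paper's proof: expand the first variation of $F$, split it into the direct term $\langle \bC(\bM),\delta\bM\rangle$ and the indirect term generated by the dependence of the cost on the marginals $P_{-1,i}(\bM)$, collapse the latter onto pairwise marginals, and read off the separable coefficient of $\delta\bM$, exactly as in Propositions~\ref{prop:nablaF} and~\ref{prop:nablaF2}. In fact your cross-species bookkeeping is more careful than the paper's own write-up: carried out literally it yields
\begin{align*}
E_i(\ell,y)=\sum_{m,x_i,x_{i+1}}&\nabla W_{\ell m}(x_i-y)^T\Bigl[x_{i+1}-x_i+\tfrac{1}{T}\sum_n\nabla W_{mn}*P_{-1,i}(\bM)\\
&-\tfrac{1}{T}b_m(x_i)\Bigr]P_{-1,i,i+1}(\bM)(m,x_i,x_{i+1}),
\end{align*}
i.e., the residual and the pair marginal must carry the summed species index $m$ (the species whose cost is perturbed by $\delta\bM$), with the symmetry $W_{\ell m}=W_{m\ell}$ needed only to rewrite the kernel $\nabla W_{m\ell}$ as $\nabla W_{\ell m}$; the subscripts $\ell$ on those two factors in \eqref{eq:Eil} as printed are an index slip that no symmetry assumption can repair, so your derivation actually corrects the displayed formula rather than merely reproducing it.
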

\begin{proof}
By definition, 
	\begin{eqnarray*}
		&&\hspace{-0.7cm}\langle \nabla F(\bM), \delta \bM\rangle \!=\! \langle \bC(\bM),\! \delta \bM\rangle\!+\!\!\! \sum_{i=0}^{T-1} T\langle  \frac{1}{T} (\sum_m\nabla W_{\ell m}\! *\! P_{-1,i}(\delta \bM)\!)^T
		\\&&(x_{i+1}-x_i+\frac{1}{T} \sum_n\nabla W_{\ell n} * P_{-1,i}(\bM))-\frac{1}{T} b_\ell(x_i),\bM\rangle. 
	\end{eqnarray*}
The second term on the right hand side equals
	\begin{eqnarray*}
		&&\hspace{-0.7cm}\sum_{i=0}^{T-1} \langle  (\sum_m\nabla W_{\ell m} * P_{-1,i}(\delta \bM))^T(x_{i+1}-x_i
		\\&&+\frac{1}{T} \sum_n\nabla W_{\ell n} * P_{-1,i}(\bM))-\frac{1}{T} b_\ell(x_i),P_{-1,i,i+1}(\bM)\rangle
		\\&=& \sum_{i=0}^{T-1}\langle  \delta \bM, E_{i} (\ell,x_i) \rangle
	\end{eqnarray*}
where $E_i$ is as in \eqref{eq:Eil}.
Hence,
	\begin{equation*}
		\langle \nabla F(\bM), \delta \bM\rangle = \langle \bC(\bM), \delta \bM\rangle + \langle E(\bM), \delta \bM\rangle,
	\end{equation*}
with $E(\bM)(\ell,x_0,x_1,\ldots,x_T) = \sum_{i=0}^{T-1} E_i(\ell,x_i)$, which completes the proof.
\end{proof}

Plugging \eqref{eq:nablaFM} into \eqref{eq:Mkiter} yields
	\begin{eqnarray}\nonumber
		\bM_{k+1}\!\!\! &=&\!\!\! \argmin_{\bM\in \Pi(\boldsymbol\mu,\boldsymbol\nu)} \langle \bC(\bM_k)+E(\bM_k)-\frac{1}{\eta} \log \bM_k, \bM\rangle  
		\\&&\!\!\!-(\epsilon + \frac{1}{\eta})\cH(\bM).\label{eq:proximaliterM}
	\end{eqnarray}
Apparently, both $\bC(\bM)$ in \eqref{eq:Cmulti} and $E(\bM)$ in \eqref{eq:nablaFM} have a graphical structure associated with the graph shown in Figure \ref{fig:Multispecies}. Assume $\bM_k$ shares the same graphical structure, then the solution $\bM_{k+1}$ to \eqref{eq:proximaliterM} also has this graphical structure. Thus, with proper initialization, the graphical structure (Figure \ref{fig:Multispecies}) is preserved through the iteration \eqref{eq:proximaliterM}. 
Each iteration \eqref{eq:proximaliterM} is a graphical OT problem and can be solved using a (generalized) SBP algorithm. Thus, the Proximal Sinkhorn Belief Propagation algorithm (Algorithm \ref{alg:proximalsinkhorn}) is applicable to the density control problem with multiple species as long as the SBP subroutine is tailored for the graphical structure in Figure \ref{fig:Multispecies}.
	\begin{figure}[h]
	\centering
	\begin{tikzpicture}[scale=0.95, every node/.style={scale=0.95}]
	\footnotesize
	 \tikzstyle{bluenode}=[draw, circle, minimum size=1.8em, inner sep=1pt, fill=blue!25];
	\tikzstyle{main}=[circle, minimum size = 9mm, thick, draw =black!80, node distance = 10mm]
	\tikzstyle{obs}=[circle, minimum size = 9mm, thick, draw =black!80, node distance = 10 mm and 6mm ]
	\tikzstyle{edge}=[>=stealth, thick, black]
	\node[main,fill=blue!25] (mu0) {$\ell$};
	
	\node[] (phi1c) [below=of mu0] {};  
	\node[obs,fill=blue!1] (mu2) [left=of phi1c] {$x_1$};  
	\node[obs,fill=blue!25] (mu1) [left=of mu2] {$x_0$};  
	\node[obs,fill=blue!1] (muS1) [right=of phi1c] {$\!x_{\!T\!-\!1\!}\!$};
	\node[obs,fill=blue!25] (muS) [right=of muS1] {$x_{T}$};
	
	\draw[edge] (mu0) to (mu1);
	\draw[edge] (mu0) to (mu2);
	\draw[edge] (mu0) to (muS1);
	\draw[edge] (mu0) to (muS);
	\draw[edge] (mu1) to (mu2);
	\draw[edge] (muS1) to (muS);
	\draw[loosely dotted, very thick] (mu2) -- (muS1); 
	\end{tikzpicture}

	\caption{Graph for graphical OT \eqref{eq:proximaliterM}}
	\label{fig:Multispecies}
	\end{figure}
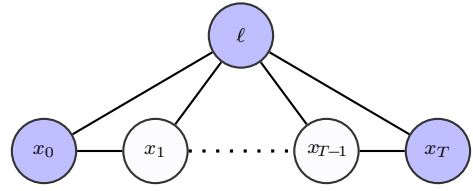
\begin{remark}
In the multi-species setting, the SBP algorithm is applied to the graphical OT in Figure \ref{fig:Multispecies}. The computation complexity for each outer iteration becomes $\cO(D^2 LT)$.
\end{remark}
\begin{remark}
Even though in \eqref{eq:MSfluid} the object cost is decoupled into $L$ separate terms, each corresponds to one species, it is straightforward generalize the method to include cost such as $\int_0^1\int V(x) \rho_t dx dt$ that depends on the group behavior of all the individuals. 
\end{remark}

\section{Linear quadratic cases}\label{sec:LQ}
A special case of particular interest is the linear quadratic density control problem where the dynamics of the individuals are linear and the costs are quadratic. That is, in the linear quadratic setting, $b_\ell, W_{m\ell}, V_\ell$ are of the form
	\begin{subequations}\label{eq:quad}
	\begin{equation}
		b_\ell(x) = A_\ell x,
	\end{equation}
	\begin{equation}
		W_{m\ell}(x) = \frac{1}{2} x^T\bar A_{m\ell} x,~\mbox{with}~\bar A_{m\ell} = \bar A_{m\ell}^T  = \bar A_{\ell m},
	\end{equation}
and 
	\begin{equation}
		V_\ell(x) = \frac{1}{2} x^T Q_\ell x.
	\end{equation}
	\end{subequations}
rendering linear dynamics for each individual
	\begin{eqnarray}\nonumber
		dX_{\ell,t}^i &=& -\frac{1}{N} \sum_{m} \sum_j \bar A_{\ell m}(X_{\ell,t}^i -X_{m,t}^j) dt + A_\ell X_{\ell,t}^idt 
		\\&&+ \sigma(u_{\ell,t}^idt+\sqrt{\epsilon} dB_{\ell,t}^i),\quad i = 1,\ldots, N,\label{eq:lineardyn}
	\end{eqnarray}
and quadratic cost in the mean field limit
	\begin{equation}
		\sum_{\ell=1}^L\int_0^1 \int_{\mR^d} [\frac{1}{2}\|\xi_{\ell,t}(x)\|^2 + \frac{1}{2}x^TQ_\ell x] \rho_{\ell,t}(x) dx dt.
	\end{equation}

When the feedback strategies $\xi_{\ell,t}$ are linear and the initial distributions are Gaussian, the distribution $\rho_{\ell,t}, \ell =1,\ldots, L$ of the populations remain Gaussian all the time. Thus, we assume the marginal distributions are Gaussian, denoted by
	\[
		\mu_\ell = \cN(m_\ell^0,\Sigma_\ell^0),\quad \nu_\ell = \cN(m_\ell^1,\Sigma_\ell^1),~\ell = 1,\ldots, L.
	\]
When there is no interaction between the individuals, the problem reduces to the covariance control problem \cite{CheGeoPav14a}.	The coupling of the agents introduces extra complexities. Recently, the linear quadratic density control problem for one species ($L=1$) has been addressed in \cite{CheGeoPav18a}.
 
We next present the solution when multiple species are involved. To this end, we parametrize the Gaussian distributions $\rho_\ell$ by 
	\begin{equation}\label{eq:LQrho}
		\rho_{\ell,t} = \cN(m_{\ell}(t), \Sigma_{\ell}(t)).
	\end{equation}
Just as standard linear quadratic optimal control, the Lagrange multipliers $\lambda_1,\lambda_2,\ldots,\lambda_L$ in \eqref{eq:optimality} are quadratic, denoted by
	\begin{equation}\label{eq:LQlambda}
		\lambda_\ell(t,x) = -\frac{1}{2} x^T \Pi_\ell(t) x + n_\ell(t)^T x + c_\ell(t).
	\end{equation}
Plugging \eqref{eq:quad}, \eqref{eq:LQrho} and \eqref{eq:LQlambda} into the optimality condition \eqref{eq:optimality} yields a coupled equation system (for all $\ell = 1,2,\ldots,L$)
	\begin{subequations}\label{eq:optimalLQ}
	\begin{eqnarray}\nonumber
		&&\hspace{-0.7cm} \dot \Pi_\ell \!-\!\Pi_\ell \sigma\sigma^T \Pi_\ell \!+\!Q_\ell \!+\!(A_\ell\!-\!\!\!\sum_m \bar A_{\ell m}\!)^T\Pi_\ell
		\\&&+ \Pi_\ell (A_\ell \!-\!\!\!\sum_m \bar A_{\ell m}\!) \!=\! 0 \label{eq:optimalLQ1}
		\\&&\hspace{-0.7cm} \dot\Sigma_\ell -(A_\ell \!-\!\!\sum_m \bar A_{\ell m} -\sigma\sigma^T\Pi_\ell) \Sigma_\ell \!-\!\!\Sigma_\ell(A_\ell -\!\!\sum_m \bar A_{\ell m} -\sigma\sigma^T\Pi_\ell)^T \nonumber
		\\&& -\epsilon \sigma\sigma^T = 0 \label{eq:optimalLQ2}
		\\&&\hspace{-0.7cm} \Sigma_\ell(0) = \Sigma_\ell^0,\quad \Sigma_\ell(1) = \Sigma_\ell^1,\label{eq:optimalLQ3}
		\\&&\hspace{-0.7cm} \dot n_\ell +(A_\ell-\sum_m \bar A_{\ell m} -\sigma\sigma^T\Pi_\ell )^T n_\ell+ \sum_m \bar A_{\ell m} n_m \nonumber
		\\&&- \sum_m(\Pi_\ell \bar A_{\ell m} +\bar A_{\ell m} \Pi_m) m_m= 0 \label{eq:optimalLQ4}
		\\&&\hspace{-0.7cm} \dot m_\ell \!-\! (A_\ell \!-\!\!\sum_m \bar A_{\ell m} \!-\!\sigma\sigma^T\Pi_\ell)m_\ell \!-\!\!\sum_m \bar A_{\ell m} m_m \!-\! \sigma\sigma^T n_\ell = 0 \label{eq:optimalLQ5}
		\\&&\hspace{-0.7cm} m_\ell(0)= m_\ell^0,\quad m_\ell(1) = m_\ell^1.\label{eq:optimalLQ6}
	\end{eqnarray}
	\end{subequations}
	
In the above, \eqref{eq:optimalLQ2} and \eqref{eq:optimalLQ5} are associated with the Fokker-Planck equation \eqref{eq:optimality2}. To see this, note that in the mean field limit each individual in the $\ell$-th species, under control policy $\sigma^T\nabla \lambda_\ell$, follows the dynamics
	\begin{eqnarray*}
		dX_t &=& (A_\ell -\sum_m \bar A_{\ell m} -\sigma\sigma^T\Pi_\ell) X_t dt +\sum_m \bar A_{\ell m}m_m dt 
		\\&&+\sigma\sigma^T n_\ell dt+\sqrt{\epsilon} \sigma dB_t.
	\end{eqnarray*}
For this linear dynamics, the Fokker-Planck equation \eqref{eq:optimality2} reduces to the Lyapunov equation \eqref{eq:optimalLQ2} for the covariance and a differential equation \eqref{eq:optimalLQ5} for the mean dynamics. The PDE \eqref{eq:optimality1} becomes \eqref{eq:optimalLQ1} and \eqref{eq:optimalLQ4}. In particular, \eqref{eq:optimalLQ1} is a Riccati equation.  

It turns out that \eqref{eq:optimalLQ} has a closed-form solution. First we observe that \eqref{eq:optimalLQ} is that $\Pi_\ell, \Sigma_\ell$ can be solved from \eqref{eq:optimalLQ1}-\eqref{eq:optimalLQ3} and are independent of the value of $n_\ell, m_\ell$. Moreover, the equations for $\Pi_\ell, \Sigma_\ell$ are independent to each other for different species $\ell$. Thus, each pair $\Pi_\ell, \Sigma_\ell$ can be computed separately. Note that the boundary conditions in \eqref{eq:optimalLQ3} for the coupled differential equations \eqref{eq:optimalLQ1}-\eqref{eq:optimalLQ3} are not conventional; the boundary values of $\Sigma_\ell$ are given on both end while no boundary value for $\Pi_\ell$ is provided. Nevertheless, closed-form solutions to \eqref{eq:optimalLQ1}-\eqref{eq:optimalLQ3} can be obtained. Let 
	\[
		\HH_\ell(t) =\epsilon\Sigma_\ell(t)^{-1}-\Pi_\ell(t),
	\]
then \eqref{eq:optimalLQ1}-\eqref{eq:optimalLQ3} become a coupled Riccati equation system
	\begin{eqnarray*}\label{eq:Riccati1}
		&&\hspace{-0.7cm} \dot \Pi_\ell \!-\!\Pi_\ell \sigma\sigma^T \Pi_\ell\! +\!Q_\ell\! +\!(A_\ell \!-\!\!\sum_m \bar A_{\ell m})^T\Pi_\ell \!+\! \Pi_\ell (A_\ell \!-\!\sum_m \bar A_{\ell m}) \!=\! 0
		\\&&\hspace{-0.7cm} \dot \HH_\ell \!+\!\HH_\ell \sigma\sigma^T \HH_\ell \!-\!Q_\ell \!+\!(A_\ell \!-\!\sum_m \bar A_{\ell m})^T\HH_\ell \!+\! \HH_\ell (A_\ell \!-\!\sum_m \bar A_{\ell m}) \!=\! 0 \label{eq:Riccati2}
		\\&&\hspace{-0.7cm} \Pi_\ell(0)+\HH_\ell(0) = \epsilon(\Sigma_\ell^0)^{-1},\quad \Pi_\ell(1)+\HH_\ell(1) = \epsilon(\Sigma_\ell^1)^{-1}.\label{eq:Riccati3}
	\end{eqnarray*}
This is exactly the characterization of the covariance control problem \cite{CheGeoPav14a} for the dynamics
	\begin{equation}\label{eq:covlinear}
		dX_t = (A_\ell -\sum_m \bar A_{\ell m})X_t dt + \sigma(u_t dt + \sqrt{\epsilon} dB_t).
	\end{equation}
Assume \eqref{eq:covlinear} is controllable, then the above Riccati equation system allows a unique solution in closed-form. We refer the reader to \cite{CheGeoPav14a} for the exact expression for the closed-form solution.
	

Once $\Pi_\ell, \Sigma_\ell$ are computed, we can plug them into \eqref{eq:optimalLQ4}-\eqref{eq:optimalLQ6} to solve for $n_\ell, m_\ell$. These are standard linear equations and can be solved efficiently. 
Once the solution to \eqref{eq:optimalLQ1}-\eqref{eq:optimalLQ6} is obtained, we can recover the optimal control as
	\begin{equation}
		\xi_{\ell,t}(x) = -\sigma^T\Pi_\ell(t)x+\sigma^Tn_\ell(t),
	\end{equation}
which is a linear state feedback.

\section{Numerical examples}\label{sec:eg}

In this section we provide several numerical examples to illustrate the proposed framework on density control of interacting agent systems. In the first example, we demonstrate the solution to the density control problem in the linear quadratic setting for both single species and multiple species can steer the agents to target distributions. In the second example, we illustrate how a general interacting particle system evolve from an initial configuration to a target configuration. 

\subsection{Linear quadratic density control}
We first consider $N$ agents of the same species interacting with each other through the dynamics \eqref{eq:lineardyn} with 
	\[
		A_1 = \left[\begin{matrix}
		0 & 1\\ 0 & 0
		\end{matrix}\right]
	\]
	\[
		\bar A_{11} = \left[\begin{matrix}
		0 & 0\\ 0 & 0.5
		\end{matrix}\right],
	\]
and $\sigma = [0~ 1]^T$. Each agent alone has a trivial second order dynamics. The choice of $\bar A_{11}$ corresponds to an interacting potential that synchronizes the velocities of the agents. The noise intensity $\epsilon$ is set to be $1$ and we choose $Q_1=I$. Our goal is to find a global feedback policy so that the distribution of the agents is transformed from 
	\[
		\mu_1 = \cN ( \left[\begin{matrix}
		1 \\ 1
		\end{matrix}\right], \left[\begin{matrix}
		0.25 & 0\\ 0 & 0.25
		\end{matrix}\right])
	\]
to 
	\[
		\nu_1 = \cN ( \left[\begin{matrix}
		1.5 \\ 0.8
		\end{matrix}\right],
		\left[\begin{matrix}
		0.5 & 0\\ 0 & 0.1
		\end{matrix}\right]).
	\]
Figure \ref{fig:LQsingle} depicts the $3-\sigma$ confidence level of the Gaussian distribution of the agents. The states of the agents should be inside this envelope with probability $99.73\%$. We also plot several typical trajectories of the agents, which stay inside the envelop as expected. 
	\begin{figure}[h]
	\centering
	\includegraphics[width=0.43\textwidth]{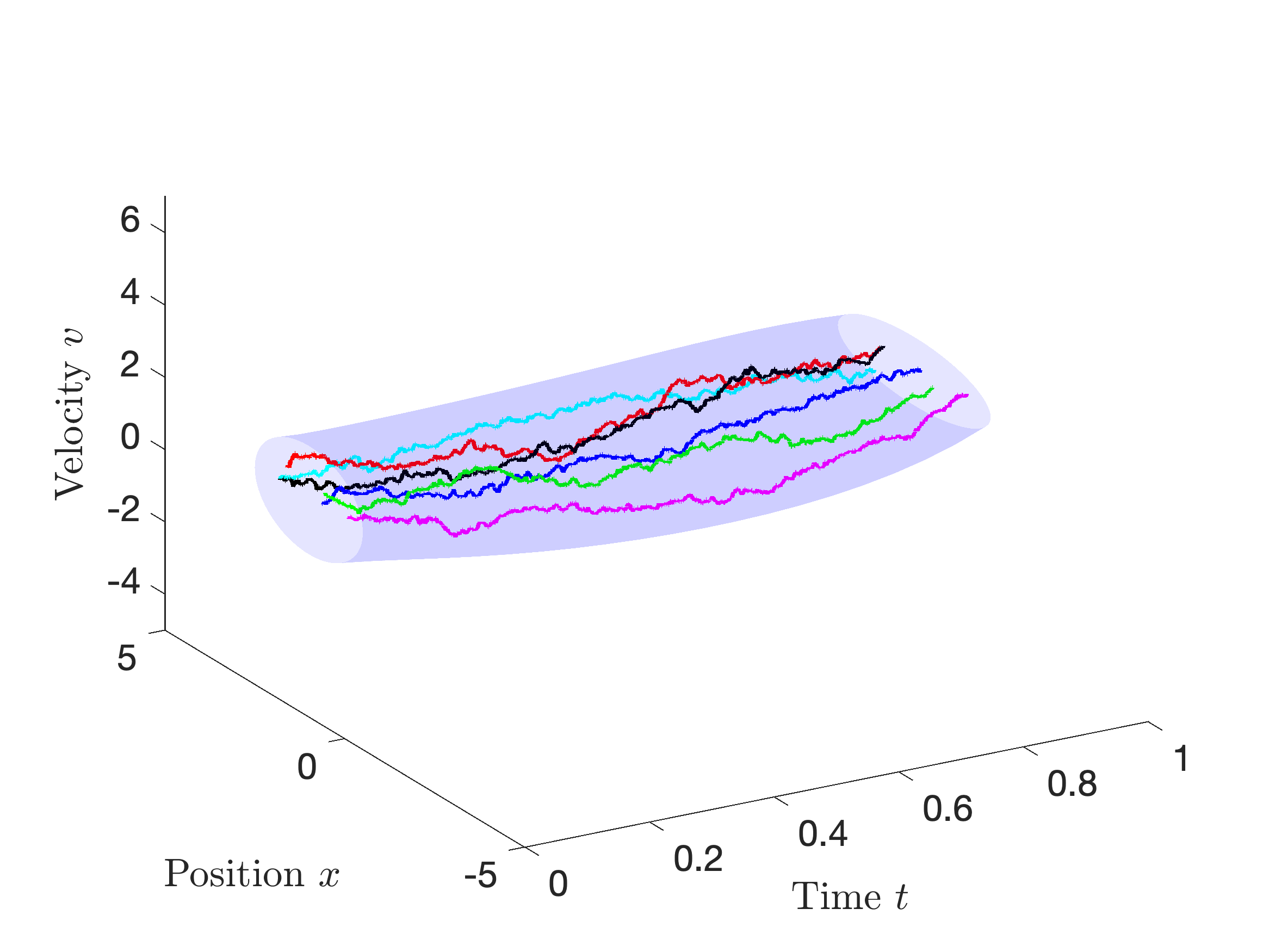}
	\caption{Covariance evolution of interacting agents under optimal control}
	\label{fig:LQsingle}
	\end{figure}

We next add another species to it. Let 
	\[
		A_2 = \left[\begin{matrix}
		0 & 1\\ 0 & 0
		\end{matrix}\right]
	\]
	\[
		\bar A_{22} = \left[\begin{matrix}
		0 & 0\\ 0 & 0.5
		\end{matrix}\right],
	\]
and the interaction matrix between the two species be 
	\[
		\bar A_{12} =\bar A_{21}= \left[\begin{matrix}
		-0.5 & 0\\ 0 & 0
		\end{matrix}\right].
	\]
The choice of $\bar A_{12}$ imposes a repulsive potential between the agents of the two species in position. 
Set $Q_2 = I$. The two marginal distributions of the second species is set to be 
	\[
		\mu_2 = \cN ( \left[\begin{matrix}
		-2 \\ -2
		\end{matrix}\right], \left[\begin{matrix}
		0.25 & 0\\ 0 & 0.25
		\end{matrix}\right])
	\]
and
	\[
		\nu_2 = \cN ( \left[\begin{matrix}
		-1 \\ -0.8
		\end{matrix}\right],
		\left[\begin{matrix}
		0.25 & 0\\ 0 & 0.1
		\end{matrix}\right]).
	\]
As before, we show the $3-\sigma$ confidence envelope of the distributions of both species in Figure \ref{fig:LQtwo}, together with some typical trajectories of the agents. It is clear from Figure \ref{fig:LQtwo} that the behavior of the first species is affected by the second one with the tendency to stay away from the second species. 
	\begin{figure}[h]
	\centering
	\includegraphics[width=0.43\textwidth]{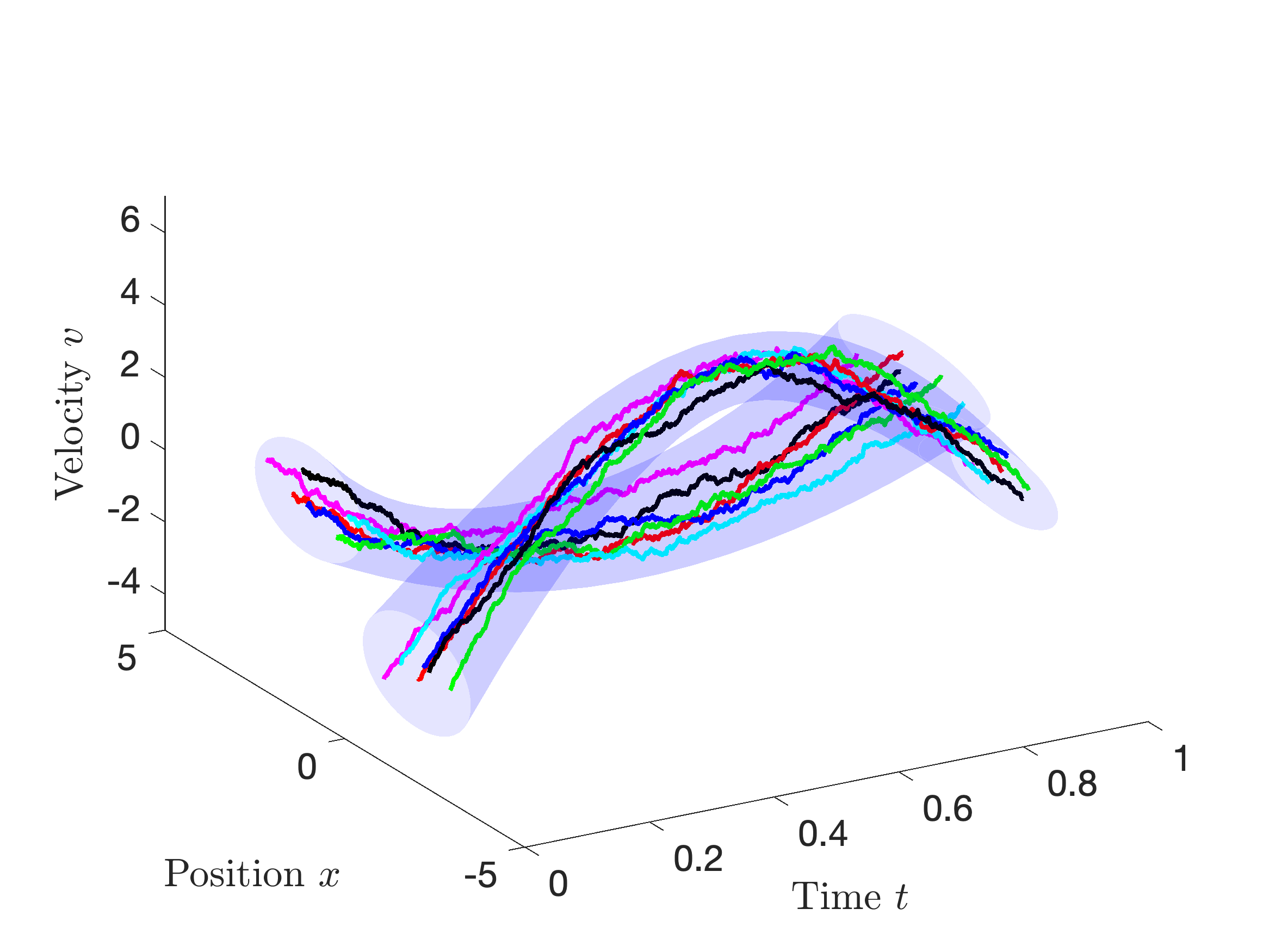}
	\caption{Covariance evolution of interacting agents from two species under optimal control}
	\label{fig:LQtwo}
	\end{figure}

\subsection{Density control for general dynamics}
In this example we consider $N$ agents in one dimensional space whose dynamics are described by \eqref{eq:particlegeneral} with
	\[
		W(x) = \frac{\beta}{|x|^\alpha},~b(x) = 0, ~\sigma = 1, ~\epsilon = 0.1.
	\]
That is, each agent alone is a first order integrator but they are affected by each other through the repulsive potential $W$.
Our goal is to steer the distribution of the agents from
	\[
		\mu = \cN(-0.4, 0.2)
	\]
to 
	\[
		\nu = \cN(0.4, 0.2)
	\]
in some optimal way. For simplicity, we set $V(\cdot) \equiv 0$. Thus the objective function is the total control effort.

The evolution of the agent distribution under optimal control policy is depicted in Figure \ref{fig:b0}-\ref{fig:a2b2} for different values of $\alpha$ and $\beta$. When there is no interaction ($\beta=0$) among the agents, the solution (Figure \ref{fig:b0}) corresponds to a standard Schr\"odinger bridge problem as expected. As we increase the repulsive potential (increase $\alpha$ or $\beta$), the agents spread out more quickly in between due to the repulsive force.
	\begin{figure}[h]
	\centering
	\includegraphics[width=0.43\textwidth]{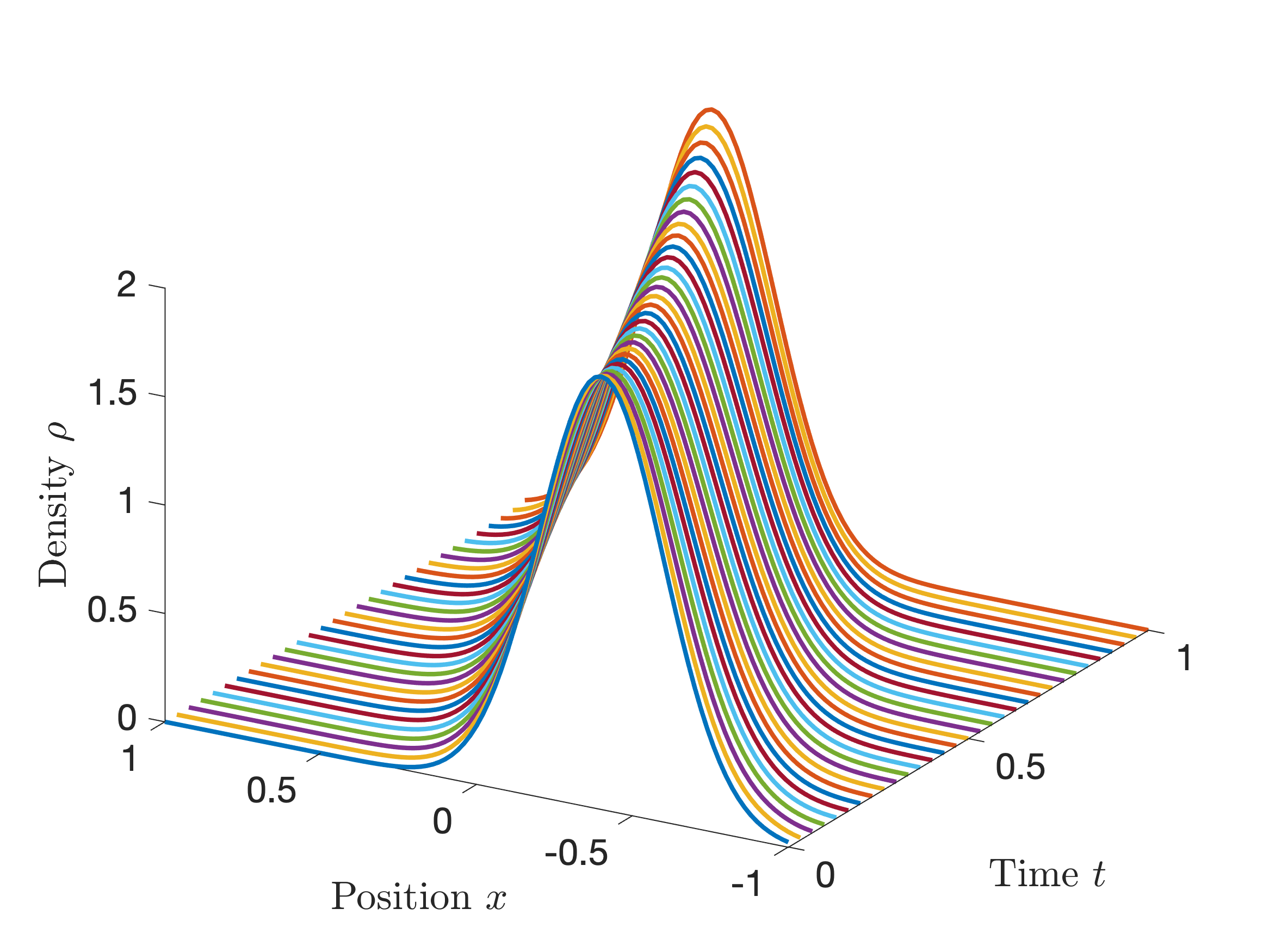}
	\caption{Density evolution under optimal control with $\beta = 0$}
	\label{fig:b0}
	\end{figure}
	\begin{figure}[h]
	\centering
	\includegraphics[width=0.43\textwidth]{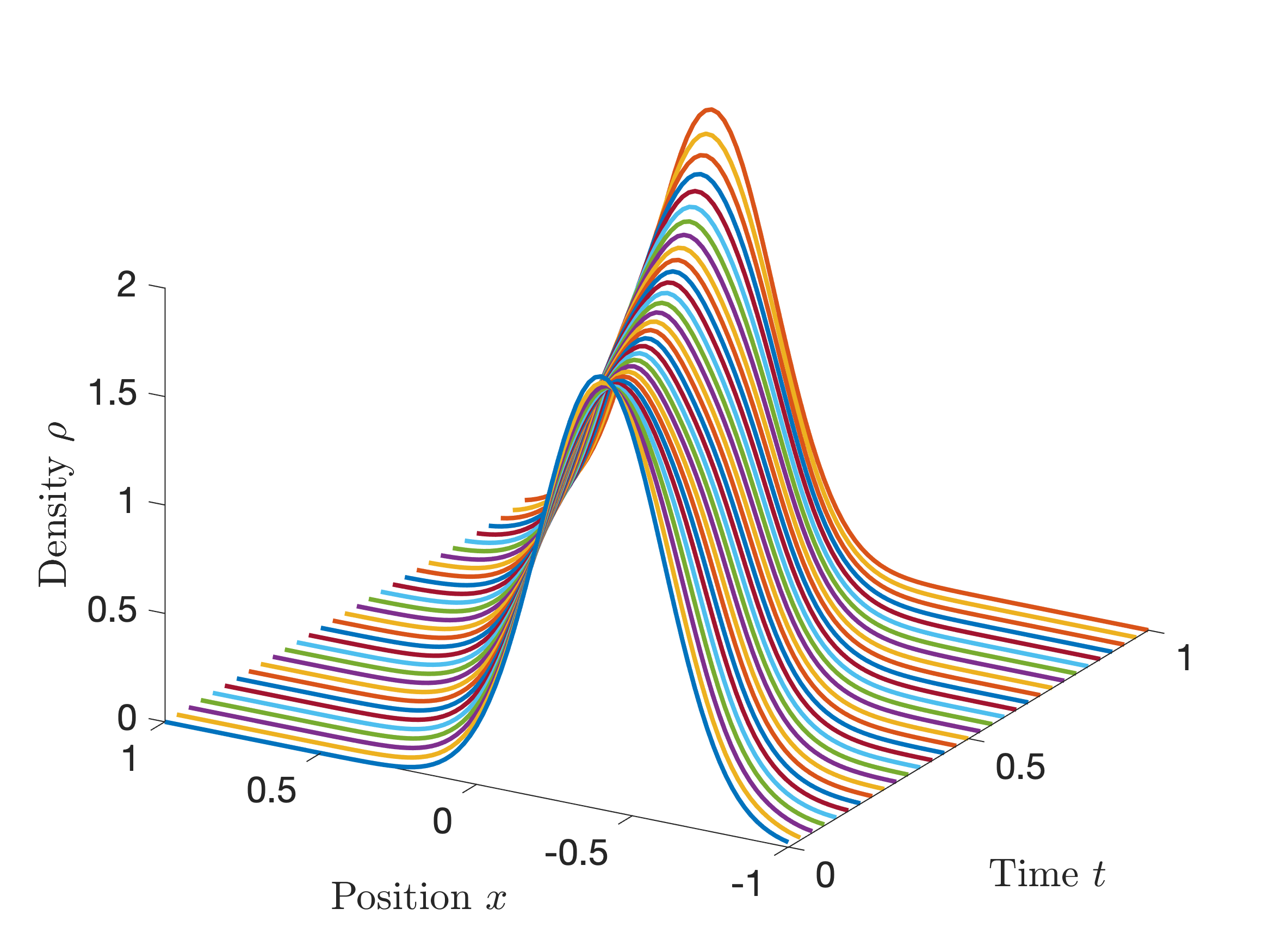}
	\caption{Density evolution under optimal control with $\alpha = 0.15, \beta = 1$}
	\label{fig:a15b1}
	\end{figure}
	\begin{figure}[h]
	\centering
	\includegraphics[width=0.43\textwidth]{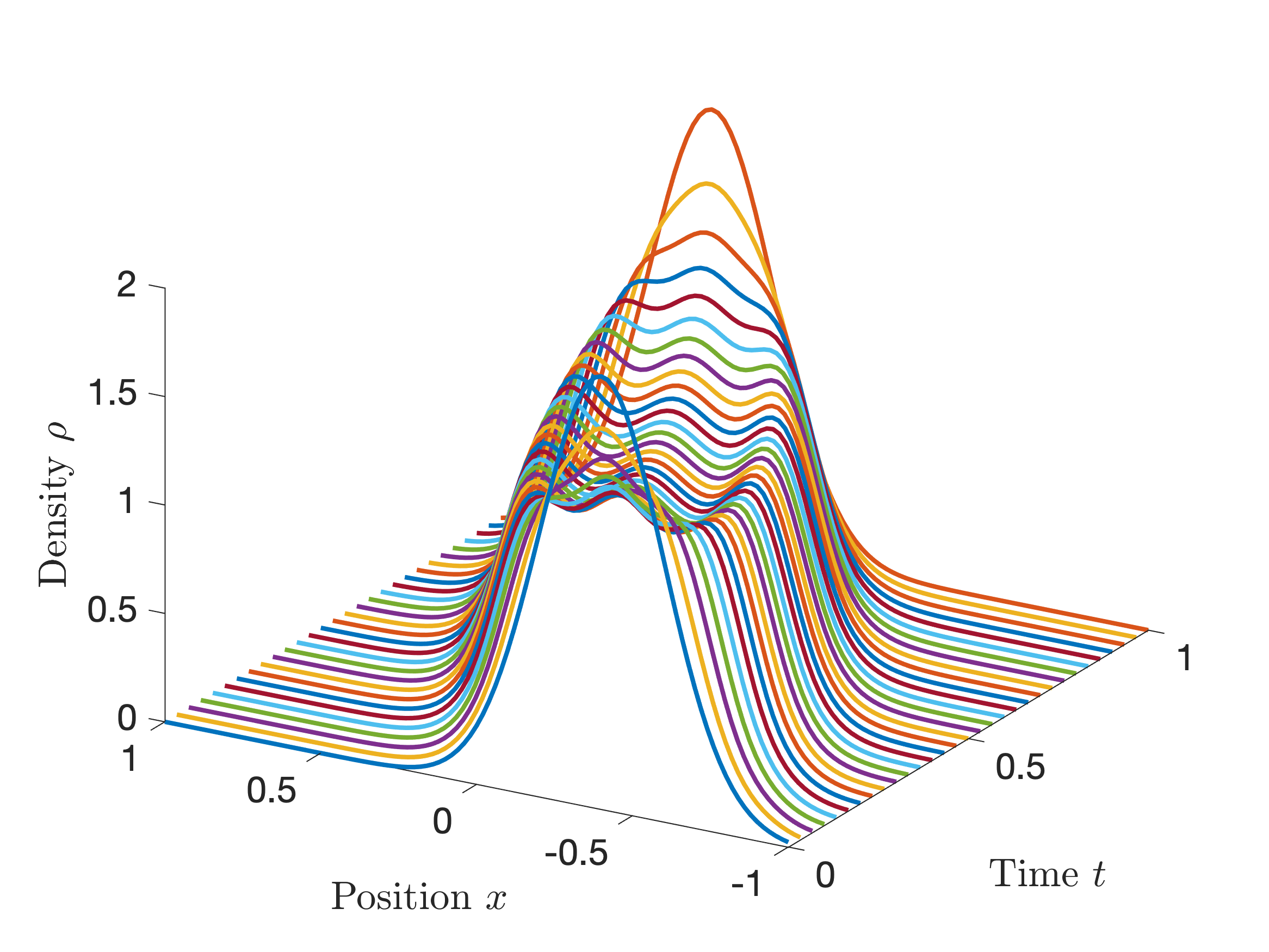}
	\caption{Density evolution under optimal control with $\alpha = 0.15, \beta = 2$}
	\label{fig:a15b2}
	\end{figure}
		\begin{figure}[h]
	\centering
	\includegraphics[width=0.43\textwidth]{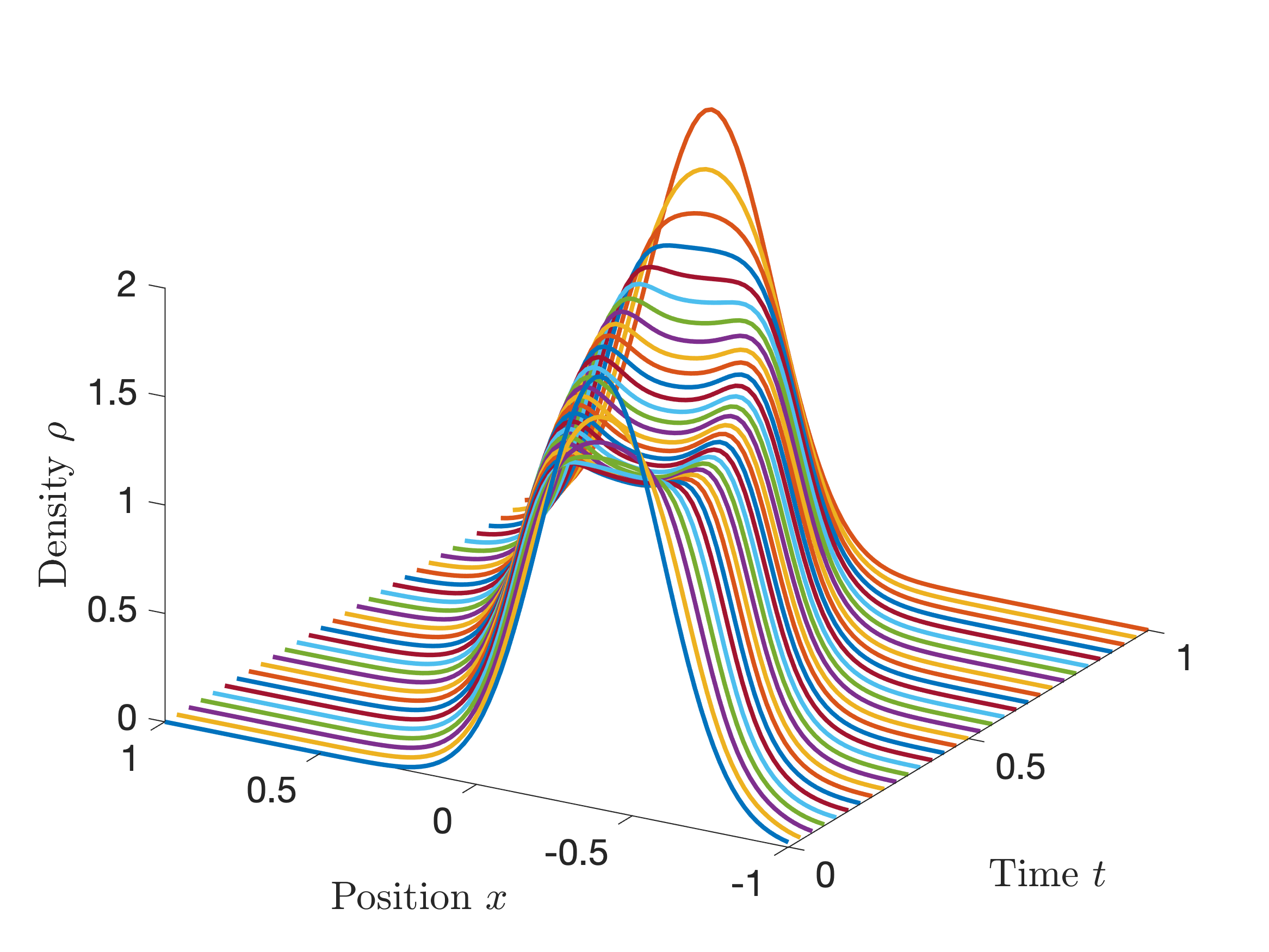}
	\caption{Density evolution under optimal control with $\alpha = 0.2, \beta = 1$}
	\label{fig:a2b1}
	\end{figure}
	\begin{figure}[h]
	\centering
	\includegraphics[width=0.43\textwidth]{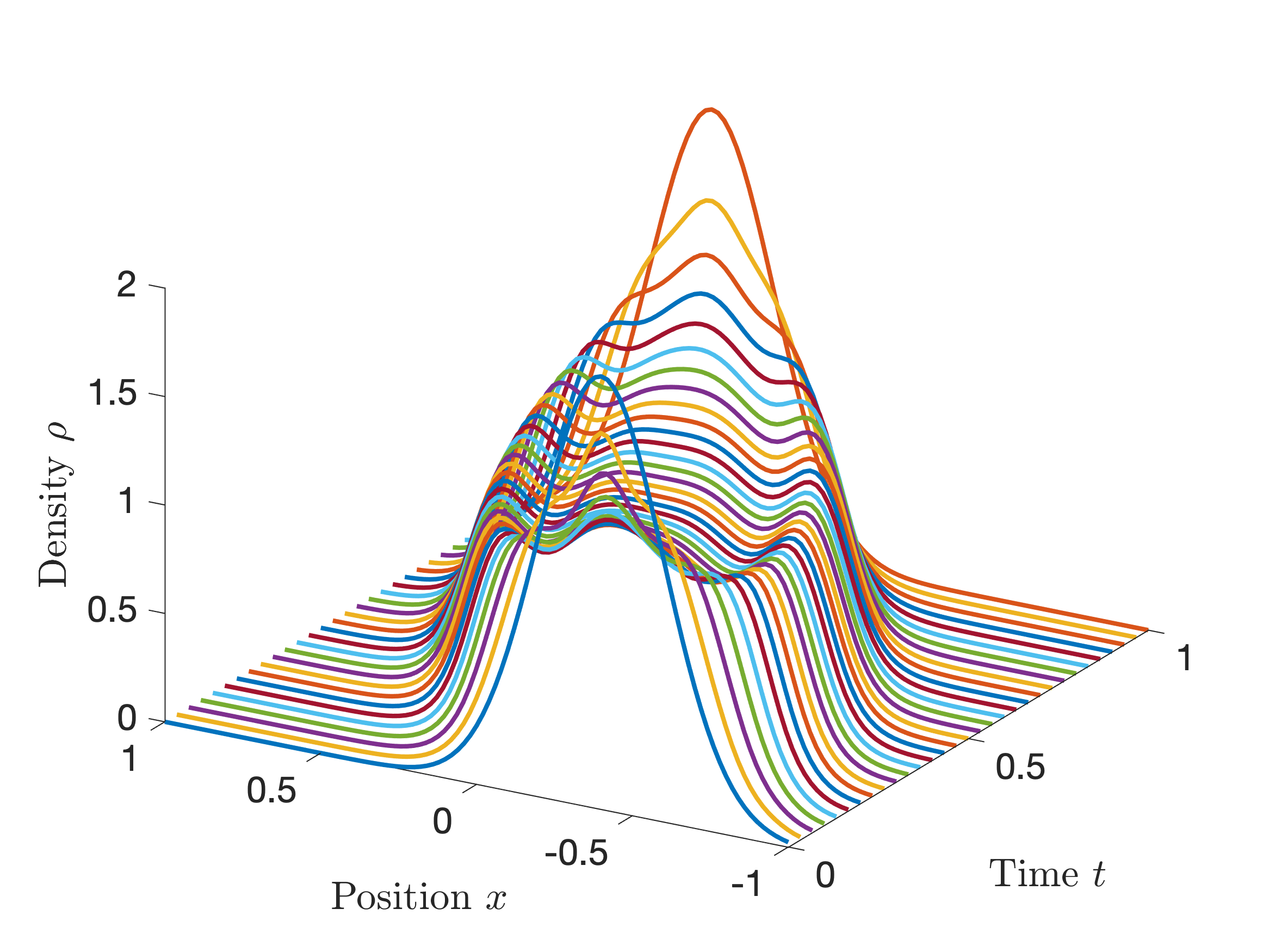}
	\caption{Density evolution under optimal control with $\alpha = 0.2, \beta = 2$}
	\label{fig:a2b2}
	\end{figure}

\section{Conclusion}\label{sec:conclusion}

We studied a swarm control problem for a large group of agents that are constantly interacting with each other. We considered the problem in the mean field and formulate it as a density control problem. We further reformulated it as a nonlinear multi-marginal optimal transport problem with proper discretization. Leveraging the proximal gradient framework, we were able to compute the solution via iteratively linearizing the problem and solving the linearized problems with the Sinkhorn belief propagation algorithm. We extended our framework to account for swarm control with multiple species. Finally, we provided a closed-form solution to the density control problem in the linear quadratic setting. In this work, we assume the total cost is the summation of the cost of each individual and is thus linear over the state distribution $\rho$. In the future we plan to extend our method to deal with cost that is a nonlinear function of $\rho$. 
	
{
\bibliographystyle{IEEEtran}
\bibliography{./refs}
}
\end{document}